   \newtheorem{theorem}[subsubsection]{Theorem}
      \newtheorem*{theorem*}{Theorem}
   \newtheorem{proposition}[subsubsection]{Proposition}
   \newtheorem{lemma}[subsubsection]{Lemma}
   \newtheorem*{conjecture*}{Conjecture}
\theoremstyle{definition}
          \newtheorem*{exercise*}{Exercise}
   \newtheorem*{example*}{Example}
   \newtheorem{definition}[subsubsection]{Definition}
   \newtheorem*{definition*}{Definition}
   \newtheorem{remark}[subsubsection]{Remark}
\newcommand{\QQ}{{\mathbb{Q}}}
\newcommand{\ZZ}{{\mathbb{Z}}}
\newcommand{\GG}{{\mathbb{G}}}
\renewcommand{\AA}{{\mathbb{A}}}
\newcommand{\cA}{{\mathcal A}}
\renewcommand{\cD}{{\mathcal D}}
\newcommand{\cI}{{\mathcal I}}
\newcommand{\cJ}{{\mathcal J}}
\newcommand{\cM}{{\mathcal M}}
\newcommand{\cO}{{\mathcal O}}
\newcommand{\cQ}{{\mathcal Q}}
\newcommand{\oJ}{{\overline{J}}}
\def\<{\langle}
\def\>{\rangle}
\newcommand{\Spec}{\operatorname{Spec}}
\newcommand{\maxord}{{\operatorname{max-ord}}}
\newcommand{\maxlogord}{{\operatorname{maxlogord}}}
\def\:{{\colon}}
\def\.{{,\dots,}}
\def\inv{{\rm inv}}
\def\maxinv{{\rm maxinv}}
\def\maxloginv{{\rm maxloginv}}
\def\loginv{{\rm loginv}}
\newcommand{\double}{\genfrac..{0pt}1
{\raise -1pt\hbox{$\scriptstyle\longrightarrow$}}{\raise 3pt\hbox
{$\scriptstyle\longrightarrow$}}}
\renewcommand{\setminus}{\smallsetminus}
\def\int{{\rm int}}
\def\tototi{\mathbin{\mathop{\otimes}\limits^{\raise-1pt\hbox
{$\scriptscriptstyle {\rm L}$}}}}
\def\indlim{\mathop{\vrule width0pt height7pt depth
4pt\smash{\lim\limits_{\raise 1pt\hbox to 14.5pt
{\rightarrowfill}}}}}
\def\projlim{\mathop{\vrule width0pt height7pt depth
4pt\smash{\lim\limits_{\raise 1pt\hbox to 14.5pt
{\leftarrowfill}}}}}
\newcommand\displaceamount{3pt}
\newcommand{\doubledown}{\ar@<\displaceamount>[d]\ar@<-\displaceamount>[d]}
\newcommand{\doubleup}{\ar@<\displaceamount>[u]\ar@<-\displaceamount>[u]}
\newcommand{\doubleright}{\ar@<\displaceamount>[r]\ar@<-\displaceamount>[r]}
\newcommand{\logord}{{\operatorname{logord}}}
\newcommand{\ord}{{\operatorname{ord}}}
\begin{document}
\title[Log resolution using weighted blow-ups]{Logarithmic resolution of singularities in characteristic 0 using weighted blow-ups}

\author[Abramovich]{Dan Abramovich}
\address{Department of Mathematics, Box 1917, Brown University,
Providence, RI, 02912, U.S.A}
\email{dan\_abramovich@brown.edu}

\author[Belotto da Silva]{Andr\'e Belotto da Silva}
\address{Universit\'e Paris Cit\'e, UFR de Math\'ematiques,
Institut de Math\'ematiques de Jussieu-Paris Rive Gauche,
B\^atiment Sophie Germain, Bureau 716,
75205 Paris Cedex 13, France}
\email{belotto@imj-prg.fr}

\author[Quek]{Ming Hao Quek}
\address{Harvard University, Department of Mathematics, 1 Oxford Street, Cambridge, MA 02138}
\email{mhquek@math.harvard.edu}

\author[Temkin]{Michael Temkin}
\address{Einstein Institute of Mathematics\\
               The Hebrew University of Jerusalem\\
                Edmond J. Safra Campus, Giv'at Ram, Jerusalem, 91904, Israel}
\email{temkin@math.huji.ac.il}

\author[W{\l}odarczyk] {Jaros{\l}aw W{\l}odarczyk}
\address{Department of Mathematics, Purdue University\\
150 N. University Street,\\ West Lafayette, IN 47907-2067}
\email{wlodar@math.purdue.edu}


\thanks{This research is supported by BSF grants 2018193 and 2022230, ERC Consolidator Grant 770922 - BirNonArchGeom, NSF grants 
DMS-2100548, DMS-2401358,   the Plan d'Investissements France 2030, IDEX UP ANR-18-IDEX-0001, and Simon foundation grants MPS-SFM-00006274, MPS-TSM-00008103. Dan Abramovich would like to thank IHES and the Hebrew University of Jerusalem for welcoming environments during periods of preparation of this article.}

\dedicatory{To James McKernan}

\date{\today}

\begin{abstract}
In characteristic zero, we construct logarithmic resolution of singularities, with simple normal crossings exceptional divisor, using weighted blow-ups.
\end{abstract}
\maketitle

\section{Introduction}

\subsection{Resolution of singularities in characteristic 0}

A thread running through James McKernan's work is that 
\begin{quote} 
every  result in mathematics deserves a natural, fully motivated and understandable proof.
\end{quote}
 The more important and difficult the result, the more urgent the need for such proofs. A proof adhering to this will be said to \emph{satisfy McKernan's criterion}.

One such important and difficult result is Hironaka's theorem on resolution of singularities in characteristic zero:

\begin{theorem}[Hironaka, 1964] Any variety $X$ over a field of characteristic zero admits a resolution of singularities $X' \to X$.\end{theorem}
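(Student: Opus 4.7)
The plan is to reduce Hironaka's theorem to an embedded problem and then solve it by induction on a single invariant via stack-theoretic weighted blow-ups. First I would embed $X$ locally into a smooth variety $Y$, so that the task becomes: given the coherent ideal sheaf $\cI = \cI_X \subset \cO_Y$, find a modification $Y' \to Y$ after which the strict (or weak) transform of $\cI$ becomes smooth, i.e.\ a principalization / order-reduction statement. Gluing the local pictures then recovers the global $X' \to X$, and the accumulated exceptional locus will be simple normal crossings by construction.

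The invariant driving the induction is the maximal order $a = \maxord(\cI)$, the largest multiplicity of $\cI$ at a point of $Y$. At each stage the plan is: (i) construct a canonical weighted center of the form $(f_1^{1/w_1},\dots,f_r^{1/w_r})$, where the $f_i$ are drawn from the derivative ideals $\cD^{(a-w_i)}\cI$ chosen so that $\cI$ is contained in the weighted ideal $(f_1^{1/w_1},\dots,f_r^{1/w_r})^a$; (ii) perform the stacky weighted blow-up of $Y$ along this center; (iii) verify that $\maxord$ strictly drops on the transform. Since $\maxord$ takes values in $\NN$, the process terminates, and the resulting smooth Deligne-Mumford stack has a coarse moduli space furnishing the desired scheme $X'$.

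The main obstacle, as in every proof of Hironaka, is to show that such a center can be chosen canonically and that the invariant provably decreases. Classically this requires the full ``maximal contact plus coefficient ideal'' machinery and a long lexicographic secondary invariant; the weighted approach instead absorbs the higher-derivative data into the weights themselves, so that the drop of $\maxord$ reduces to a direct Rees-algebra calculation on the blow-up. The delicate step is proving that the weighted center is intrinsic, i.e.\ independent of the choices of $f_i$ and functorial under smooth morphisms; this is exactly what lets the local construction descend to a global modification of $Y$, and it is what forces one into the stacky, logarithmic framework in the first place. Finally, preservation of snc is enforced by working throughout with log-smooth ambient spaces and replacing $\cD$ by its logarithmic analogue, which keeps every successive center transverse to the existing boundary divisor.
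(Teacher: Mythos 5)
Your outline correctly identifies the overall architecture of the weighted approach the paper relies on (embed locally, principalize $\cI_X$ via a stack-theoretic weighted blow-up of a canonical, functorial center, glue by functoriality, pass from the Deligne--Mumford stack to a variety), but the description of the induction contains a genuine error.

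You assert that the invariant driving the induction is the single number $a = \maxord(\cI) \in \NN$, and that the weighted approach ``absorbs the higher-derivative data into the weights themselves'' so that the maximal-contact/coefficient-ideal machinery and the lexicographic secondary invariant are dispensed with. Neither claim is correct. The invariant in \cite{ATW-weighted}, reproduced in Section~\ref{Sec:invariant-and-center} of the paper, is the full tuple $\inv_p(\cI) = (a_1,a_2,\dots,a_k)$ of rational numbers, built precisely by iterating maximal contact and coefficient ideals: $a_1 = \ord_p(\cI)$, and $(a_2,\dots,a_k)$ is the rescaled invariant of $C(\cI,a_1)\rest{V(x_1)}$. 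The theorem (Theorem~\ref{Th:invariant drops}) is that $\maxinv$ drops lexicographically, not that $\maxord$ drops. Indeed $\maxord$ need not drop: for $\cI = (x^2 + y^2 z^2)$ in $\AA^3$ one has $\inv_p(\cI) = (2,4,4)$ and center $(x^2,y^4,z^4)$; after the weighted blow-up with weights $(2,1,1)$, in the chart where $y$ is a unit the weak transform is $(x'^2 + y'^2 z'^2)$ with $y'$ invertible, still of order $2$, but with $\inv = (2,2) < (2,4,4)$. Thus your proposed termination argument (``$\maxord$ takes values in $\NN$'') would not even get off the ground; one needs well-orderedness of the set of rational invariant tuples, which requires its own argument (cf.\ \cite[Section 5.1]{ATW-weighted}). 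Relatedly, the weighted center in the paper is not chosen from arbitrary elements of derivative ideals subject to a containment; it is the specific $(x_1^{a_1},\dots,x_k^{a_k})$ of iterated maximal contacts with the exponents supplied by the invariant, and uniqueness as a valuative $\QQ$-ideal (Proposition~\ref{P:well-definedness-and-functoriality}) is what allows gluing.

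Two smaller points. First, the simple normal crossings requirement is not automatic ``by construction'' in \cite{ATW-weighted}: obtaining it is exactly the subject of the present paper, via the extended logarithmic invariant $\loginv^*$ whose values lie in a larger well-ordered set including ordinal terms $\omega + b_i$, and whose centers mix maximal contact elements with monomial parameters. Second, the coarse moduli space of a smooth Deligne--Mumford stack is in general not smooth (e.g.\ weighted projective space), so ``the coarse moduli space furnishes $X'$'' is not immediate; the passage from a stack resolution to a variety resolution needs the combinatorial argument of \cite[Proposition 8.1.2]{ATW-weighted}.
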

Of course, more is true: 
\begin{enumerate} 
	\item Hironaka's resolution is \emph{embedded}, in other words, if $X\subset Y$ is an embedding in a nonsingular variety, then the resolution $X' \to X$ is obtained as the proper transform of $X$ in a modification $Y' \to Y$, with $Y'$ nonsingular. 
	\item A \emph{re-embedding principle} assures that the resolution process is independent of the choice of embedding. 
	\item Hironaka's resolution goes by way of a sequence of blow-ups along smooth centers on $Y$. This has the consequence that $Y' \to Y$ as well as $X' \to X$ are projective. But also it implies that the geometry of $Y$ is changed on the way to $Y'$ in a controllable way. 
    	\item Hironaka's resolution is \emph{logarithmic}, in the sense that the exceptional divisor of $Y' \to Y$ is a simple normal crossings divisor meeting $X'$ transversely.
	\item It turns out that Hironaka's resolution is \emph{functorial} with respect to smooth morphisms: if $\widetilde{Y} \to Y$ is smooth and $\widetilde{X} := X \times_Y \widetilde{Y}$, then  Hironaka's resolution for $\widetilde{X} \subset \widetilde{Y}$ is the pull-back of Hironaka's resolution for $X \subset Y$ along $\widetilde{Y} \to Y$.

	\item This in particular assures that it suffices to choose embeddings locally and patch them up. 
\end{enumerate}

This is one of the most important tools at the hands of a birational geometer (and its lack is a serious obstacle for results in positive characteristics).

Hironaka's original proof achieved all this\footnote{To be precise, functoriality is a property explicitly present in later papers of Villamayor \cite{Villamayor}, Schwartz \cite{Schwartz}, Bierstone--Milman \cite{Bierstone-Milman}, and W{\l}odarczyk \cite{Wlodarczyk}.} but falls short of McKernan's criterion of a natural, fully motivated and understandable proof. 
Hironaka, and many following him, worked over the past 60 years to remedy the situation. The present paper is about just one of these efforts, where weighted blow-ups are used.

\subsection{Weighted resolution of singularities}

We continue to work exclusively in characteristic 0.

The work \cite{ATW-weighted} showed how to resolve singularities with a proof satisfying Mckernan's criterion. It has one major drawback --- it does not provide \emph{logarithmic} resolution, a requirement of birational geometry. In particular it cannot on its own serve Mckernan's needs. 

The main result of  \cite{ATW-weighted} provides a resolution of singularities $X' \to X$ where $X'$ is a Deligne--Mumford stack. Some may see this as a drawback, though we do not. In any case, standard techniques of combinatorial nature allow one to replace a resolution using Deligne--Mumford stacks by a resolution using varieties. See  \cite[Proposition 8.1.2]{ATW-weighted}. 

In analogy with Hironaka's proof, the transition in \cite{ATW-weighted}  from $Y$ to $Y'$ involves \emph{weighted blow-ups}, again allowing one to track the change of geometry (e.g the Chow ring, see \cite{Arena-Obinna}), and guaranteeing projectivity on coarse moduli spaces.

\subsection{Toroidal weighted resolution of singularities}

In \cite{Quek}, Quek extended the techniques of  \cite{ATW-weighted} and provided a \emph{toroidal resolution} $X' \to X$, in which $X'$ is a toroidal Deligne--Mumford stack, and the exceptional divisor is subsumed in the toroidal structure. In particular $X'$ may have toroidal singularities. 

Once again, toroidal singularities are easily resolved. In particular Quek deduces a result equivalent in many ways to Hironaka's, \cite[Theorem 1.5]{Quek}. On the other hand, one no longer tightly controls the geometric changes of the ambient variety through $Y' \to Y$, as the transitions involve \emph{toroidal weighted blow-ups}, which are more intricate than weighted blow-ups.

We will recall Quek's methods and intermediate results in Section \ref{Sec:toroidal-resolution} below, as they are relevant to the present exposition.

\subsection{Logarithmic weighted resolution of singularities}

The main result here shows how to combine the results of  \cite{ATW-weighted} with the methods of \cite{Quek} to give a functorial logarithmic resolution of singularties $X'\to X$, adhering to McKernan's principle of a natural, fully motivated and hopefully understandable proof. 
In brief, to a singular point $p$ of a Deligne--Mumford stack $X$ embedded with pure codimension in a smooth Deligne--Mumford stack  $Y$, meeting a simple normal crossings divisor $D$ properly,  we attach an upper-semicontinuous singularity invariant $\loginv^*_X(p)$ taking values in a well-ordered set $\Gamma$. The formation of $\loginv^*$  is functorial with respect to \emph{smooth} morphisms $\widetilde{Y} \to Y$. 
The maximal locus of $\loginv^*$ is the support of a weighted blow-up center $J^*$, which is also functorial. 

 \begin{theorem}[Functorial logarithmic resolution of singularities]\label{Th:log resolution}  The weighted blow-up $Y' \to Y$ of the reduced center associated to $J^*$ is a smooth Deligne--Mumford stack with transformed simple normal crossings divisor $D'$, formed as union of the pre-image of $D$ and the exceptional divisor on $Y'$. The proper transform $X'$ of $X$ satisfies $$\loginv^*_{p'}(X') < \loginv^*_p(X)$$ for any point $p \in X$ and any $p'\in X'$ above it.

 After finitely many iterations, the proper transform $X^{(n)}$ is a smooth locus on a smooth stack $Y^{(n)}$ carrying a simple normal crossings divisor $D^{(n)}$. 
 \end{theorem}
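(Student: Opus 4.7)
The plan is to deduce Theorem~\ref{Th:log resolution} by combining the weighted resolution machinery of \cite{ATW-weighted} with the logarithmic refinement of Quek \cite{Quek}, while arranging that the selected center is smooth and properly positioned with respect to the SNC divisor $D$. Assuming $\loginv^*$ and $J^*$ are constructed so that at a point $p$ of the maximal locus, $J^*$ is a weighted idealistic datum on $Y$ whose cosupport is smooth and meets $D$ transversally, the first step is to verify that the reduced weighted blow-up $Y'\to Y$ along $J^*$ produces a smooth Deligne--Mumford stack. This follows from \cite{ATW-weighted}: admissibility of $J^*$ together with the smoothness of its cosupport means exactly that after re-scaling the weights to a reduced profile, the blow-up stack is smooth, with exceptional divisor $E \subset Y'$ a smooth irreducible (twisted) Cartier divisor.

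The next step is to check that the enlarged divisor $D' := (Y'\to Y)^{-1}(D)\cup E$ is SNC on $Y'$. Since $J^*$ is chosen compatibly with the log structure (its cosupport meets each stratum of $D$ transversally), the total transform of $D$ remains SNC away from the blow-up locus; along the blow-up locus, the toric-model computation for weighted blow-ups shows that $E$ meets the strict transform of each component of $D$ transversally, and any two components of $E\cup \text{(strict transform of } D)$ either already met transversally below or are separated in the fibration structure of the weighted blow-up. This verification is local in the \'etale (or smooth) topology on $Y$ and can be carried out on formal charts.

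The heart of the proof is the strict decrease $\loginv^*_{p'}(X')<\loginv^*_p(X)$. Following the pattern of \cite[Theorem 1.5]{Quek} and the corresponding step of \cite{ATW-weighted}, I would work formally at a chart around $p$, in which $J^*$ is presented by an ordered tuple of generators with prescribed weights that exactly realize $\loginv^*_p(X)$. On each chart of the weighted blow-up the transform of each generator is divisible by the exceptional parameter to the requested order, so that after division the resulting ideal has strictly smaller leading component in $\Gamma$. The logarithmic bookkeeping ensures that the contribution coming from the freshly created exceptional divisor is recorded in a lower-order slot of the invariant and does not inflate the leading component, so the lexicographic value drops. Termination of the sequence $X \supset X' \supset X'' \supset \cdots$ is then automatic: $\Gamma$ is well-ordered, so the strictly decreasing sequence $\loginv^*_{p^{(k)}}(X^{(k)})$ eventually reaches the minimal value, which by construction corresponds to $X^{(n)}$ being smooth and meeting $D^{(n)}$ transversally on a smooth $Y^{(n)}$.

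The main obstacle, in my view, lies in calibrating $\loginv^*$ so that all three requirements coexist: the maximal locus of $\loginv^*$ is the cosupport of an \emph{honestly smooth} weighted center (not merely a toroidal one as in \cite{Quek}), the center is compatible with $D$ so that the blow-up produces an SNC divisor, and the leading component of the invariant drops under this more restrictive class of blow-ups. Ensuring that ``old'' components of $D$ (which contribute to the log structure but not to the singularity cost) are cleanly separated from ``new'' exceptional components (which must be tracked to preserve transversality in later steps) is the bookkeeping problem that makes the passage from the toroidal setting of \cite{Quek} to the SNC setting of Theorem~\ref{Th:log resolution} delicate; once that is in place, the geometric and combinatorial verifications above are comparatively routine extensions of \cite{ATW-weighted} and \cite{Quek}.
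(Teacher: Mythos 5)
There is a genuine gap: you correctly locate the crux of the problem --- ``calibrating $\loginv^*$ so that the maximal locus is the cosupport of an honestly smooth weighted center, compatible with $D$, and the invariant drops'' --- but you then declare this bookkeeping ``delicate'' and leave it unsolved, while the rest of your argument depends on it. The paper's answer is an explicit concatenation construction which your proposal never surfaces: when $\loginv_p(\cI)=(a_1,\dotsc,a_k,\infty)$ with $J^{\log}_{\cI,p}=(x_1^{a_1},\dotsc,x_k^{a_k},\cQ^{1/d})$, one applies the \emph{non-logarithmic} invariant $\inv_p$ of \cite{ATW-weighted} to the monomial saturation $\cQ$, getting $\inv_p(\cQ)=(c_1,\dotsc,c_m)$ with center $(y_1^{c_1},\dotsc,y_m^{c_m})$, and sets $\loginv^*_p(\cI)=(a_1,\dotsc,a_k,\omega+c_1/d,\dotsc,\omega+c_m/d)$ with center $(x_1^{a_1},\dotsc,x_k^{a_k},y_1^{c_1/d},\dotsc,y_m^{c_m/d})$. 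The fact that this center is simultaneously smooth and $D$-compatible is not something that needs to be engineered by hand: it follows from Proposition~\ref{Prop:principalize-monomial-ideals}, which shows (via torus-equivariance and functoriality for \'etale maps) that $y_1,\dotsc,y_m$ can be taken to be monomial parameters, i.e.\ regular parameters lying among the components of $D$. That single observation resolves all three of your ``coexistence'' requirements at once, and it is precisely the idea missing from your write-up.

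A secondary point: your argument that the invariant drops (``the transform of each generator is divisible by the exceptional parameter to the requested order, so after division the ideal has strictly smaller leading component'') is an assertion of the conclusion rather than a proof, and it bypasses the paper's actual mechanism. The paper proves the drop by showing (Proposition~\ref{Prop:maximalitry-in-B*}) that on the degeneration to the weighted normal cone $B_J$, the maximal value of $\loginv^*$ is unchanged and is attained \emph{exactly} on the vertex locus $V(\cA_{J+})$; since $Y'$ is the quotient of $B_J\setminus V(\cA_{J+})$, functoriality and upper semicontinuity then force the drop. That proposition in turn runs an induction on the number $k$ of finite entries: the base case $k=0$ reduces to the non-logarithmic Proposition~\ref{Prop:maximality-in-B} applied to $\cQ$ using that $B_J\to Y$ is log smooth when the center is monomial, and the inductive step repeats the argument of Proposition~\ref{Prop:maximality-in-B} with logarithmic derivatives and the logarithmic form of Giraud's Lemma~\ref{Lem:derivatives-vs-weak-transform}. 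Your chart-by-chart sketch, if carried through in detail, would have to recreate this computation; as written it does not supply it.
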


\begin{remark}
See  \cite{Wlodarczyk-cobordant1,Wlodarczyk-cobordant} for an earlier, self-contained approach for the same normal-crossings weighted resolution, which uses a different invariant and blowup center.
Remark \ref{Rem:approaches} briefly indicates the difference of these two approaches. The present paper offers an alternative proof, deriving the result from \cite{ATW-weighted} and \cite{Quek}.
\end{remark}

\begin{remark}
Rather than prove the result from scratch, we deduce it in Section \ref{Sec:general-ideals} from the results of \cite{ATW-weighted}, reviewed in Section \ref{Sec:principalization}, and the results of \cite{Quek}, reviewed in Section \ref{Sec:toroidal-resolution}. In other words, we show that  ``sufficiently strong resolution and  toroidal resolution imply equally strong normal-crossings resolution".

\end{remark}

\subsection{Resolution via embedded resolution and principalization}

Standard techniques for resolution of singularities  reduce the geometric problem to more algebraic ones. 

First, the procedure we devise requires $X$ to be embedded in a smooth variety $Y$. This can always be achieved locally, but to globalize it one needs to verify that the procedure is independent of choices, what we call the \emph{re-embedding principle}, see \cite[Section 8.1]{ATW-weighted}.

Second, instead of working with $X \subset Y$ one works with improving the ideal $\cI_X \subset \cO_Y$. The problem is \emph{principalization} of an ideal $\cI\subset \cO_Y$, which in our case boils down to having the total transform of $\cI$ become exceptional for $Y^{(n)} \to Y$. A simple observation, sometimes known as \emph{accidental resolution}, guarantees that $X$ is resolved along the way, see \cite[Section 6.3]{ATW-weighted}.


\subsection{Where this comes from} In \cite{ABTW-foliated} one proves resolution and principalization in the presence of foliations. In that situation working with extended invariants similar to $\loginv^*$ introduced here becomes essential, and the logarithmic case occurs as a natural byproduct. On the other hand, Quek worked on simplifying his presentation of results of \cite{Quek} for a course at Stanford and a workshop in Heidelberg, and also arrived at the same extended invariant. We decided to combine forces and attempt to describe as natural an argument as we could produce.

\section{Principalization of ideals}\label{Sec:principalization}

The proof of the main result of \cite{ATW-weighted} relies on the notions and ideas we shall first discuss in this section.

\subsection{Derivatives, order, and maximal contact}

\begin{definition} Consider an ideal $\cI \subset \cO_Y$ on a smooth variety $Y$ and the sheaf of differential operators $\cD_Y^{\leq a}$ of order $\leq a$ on $Y$. We define $\cD^{\leq a}_Y(\cI) \subset \cO_Y$ to be the ideal generated by the collection of $\nabla(f)$, with $\nabla $ a local section of  $\cD_Y^{\leq a}$  and $f$ a local section of $\cI$. When $Y$ is clear from context, we write $\cD^{\leq a}$ instead of $\cD_Y^{\leq a}$.
\end{definition}

It is clear that $\cI \subset \cD_Y^{\leq 1}(\cI) \subset\cdots$ is an increasing chain of ideals. If $\cI \neq 0$ it eventually stabilizes at the trivial ideal $(1)$.  

Since we work in characteristic zero, we have that the composition $\cD_Y^{\leq i}\cD_Y^{\leq j} = \cD_Y^{\leq i+j}$, whence $\cD_Y^{\leq i}\cD_Y^{\leq j}(\cI) = \cD_Y^{\leq i+j}(\cI)$. 

Derivative ideals are compatible with localization: $\cD^{\leq a}(\cI_p) = \cD^{\leq a}(\cI)_p.$

\begin{definition} 
We define the \emph{order of $\cI$ at $p$} as follows:  $$\ord_p(\cI) = \min\left\{a \in \ZZ_{\geq 0}: \cD^{\leq a}(\cI_p) = \cO_{Y,p}\right\}.$$ We set $\maxord_Y(\cI) = \max_{p \in Y}\ord_p(\cI)$. The order of the zero ideal is set to $\infty$.
\end{definition}

The locus of order $>a$ is thus the closed vanishing locus of $\cD^{\leq a}(\cI)$, hence order is upper-semicontinuous.

\begin{definition} 
If $\ord_p(\cI)=a$, the \emph{maximal contact ideal} for $\cI$ at $p$ is  $\cD^{\leq a-1}(\cI)_p$. A \emph{maximal contact element} for $\cI$ at $p$ is a section $x \in \cD^{\leq a-1}(\cI)_p$ with $\cD_Y(x)_p = (1)$, that is, some derivative of $x$ is a unit. 
\end{definition}

Since we are working in characteristic 0, maximal contact elements always exist \emph{locally}: if $1 \in \cD^{\leq a}(\cI)$ then there is $\nabla \in \cD^{\leq 1}_{Y,p}$ and $x \in \cD^{\leq a-1}(\cI)_p$ such that $\nabla x = 1$.

Note that the formation of derivative ideals is functorial for smooth morphisms: given a smooth morphism $f \colon \widetilde{Y} \to Y$, we have $\cD_Y^{\leq a}(\cI)\cO_{\widetilde{Y}} = \cD_{\widetilde{Y}}^{\leq a}(\cI \cO_{\widetilde{Y}})$.
It follows that:

\begin{proposition}\label{P:order-and-max-contact-under-smooth-morphisms}
For a smooth morphism $f \colon \widetilde{Y} \to Y$ with $f(\widetilde{p}) = p$, we have $\ord_{\widetilde{p}}(\cI\cO_{\widetilde{Y}}) = \ord_{p}(\cI)$. Moreover, $x\in \cO_{Y,p}$ is a maximal contact element for $\cI$ at $p$ if and only if $f^*x\in \cO_{\widetilde{Y},\widetilde{p}}$ is a maximal contact element for $\cI\cO_{\widetilde{Y}}$ at $\widetilde{p}$. 

If moreover $f$ is surjective, then $\maxord(\cI\cO_{\widetilde{Y}}) = \maxord(\cI)$.
\end{proposition}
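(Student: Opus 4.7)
The plan is to reduce everything to the compatibility
\[
\cD_Y^{\leq a}(\cI)\cO_{\widetilde{Y}} \;=\; \cD_{\widetilde{Y}}^{\leq a}(\cI\cO_{\widetilde{Y}})
\]
stated just before the proposition, together with the fact that the local homomorphism $\cO_{Y,p}\to\cO_{\widetilde{Y},\widetilde{p}}$ induced by a smooth $f$ is faithfully flat (smoothness gives flatness, and a flat local homomorphism is automatically faithfully flat). The general fact I will invoke repeatedly is that for a faithfully flat local homomorphism $A\to B$ and an ideal $J\subseteq A$, one has $J = A$ iff $JB = B$, and more generally $a\in J$ iff its image lies in $JB$.

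First I would prove the order equality. By definition $\ord_p(\cI)\leq a$ is equivalent to $\cD_Y^{\leq a}(\cI)_p = \cO_{Y,p}$. Stalking the compatibility at $\widetilde{p}$ yields
\[
\cD_{\widetilde{Y}}^{\leq a}(\cI\cO_{\widetilde{Y}})_{\widetilde{p}} \;=\; \cD_Y^{\leq a}(\cI)_p \cdot \cO_{\widetilde{Y},\widetilde{p}},
\]
so the left-hand side is the unit ideal iff the right-hand side is, which by the general fact is iff $\cD_Y^{\leq a}(\cI)_p = \cO_{Y,p}$. Running this over all $a$ gives $\ord_{\widetilde{p}}(\cI\cO_{\widetilde{Y}}) = \ord_p(\cI)$.

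For the maximal contact statement, set $a = \ord_p(\cI) = \ord_{\widetilde{p}}(\cI\cO_{\widetilde{Y}})$. Membership $x\in \cD^{\leq a-1}(\cI)_p$ is equivalent to $f^*x \in \cD^{\leq a-1}(\cI)_p\cdot \cO_{\widetilde{Y},\widetilde{p}} = \cD_{\widetilde{Y}}^{\leq a-1}(\cI\cO_{\widetilde{Y}})_{\widetilde{p}}$ by faithfully flat descent of ideal membership. Next, applying the same derivative compatibility to the principal ideal $(x)$ gives $\cD_Y(x)\cO_{\widetilde{Y}} = \cD_{\widetilde{Y}}(f^*x)$, so the same argument shows $\cD_Y(x)_p = (1)$ iff $\cD_{\widetilde{Y}}(f^*x)_{\widetilde{p}} = (1)$. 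Combining these two equivalences yields the maximal contact claim.

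Finally, the surjective case is formal: the pointwise equality of orders gives $\maxord(\cI\cO_{\widetilde{Y}}) \leq \maxord(\cI)$ with no assumption on $f$, and surjectivity lets me lift each $p\in Y$ to some $\widetilde{p}\in\widetilde{Y}$ realizing the same order, yielding the reverse inequality. There is no genuine obstacle here; the only care needed is to invoke the derivative-ideal compatibility cleanly and to confirm faithful flatness of the local homomorphism before turning equalities of ideals into pointwise statements.
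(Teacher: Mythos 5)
Your proposal is correct and fills in exactly the argument the paper leaves implicit: the paper states the compatibility $\cD_Y^{\leq a}(\cI)\cO_{\widetilde{Y}} = \cD_{\widetilde{Y}}^{\leq a}(\cI\cO_{\widetilde{Y}})$ and then asserts the proposition with ``It follows that,'' and your combination of that compatibility with faithful flatness of the induced local homomorphism (to descend unit-ideal conditions and ideal membership) is the standard way to make that deduction precise. The surjective case is handled exactly as one would expect; no issues.
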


We note, however, that the \emph{choice} of a maximal contact $x$ is not unique, and is therefore not functorial.

\subsection{Coefficient ideals}
To go further we need some inductive process, and the standard approach involves induction on dimension by restriction to $V(x)$. Since $x$ only exists locally, functoriality is required to glue the results.

This inductive process requires defining an ideal on $V(x)$ which remembers much of what $\cI$ is. The standard approach involves a coefficient ideal, which is already defined on $Y$.

\begin{definition}[{See \cite[Section 3.5.4]{Kollar}}]
Say $\cI$ has maximal order $a$. We consider $\cD^{\leq a-i}(\cI)$ as having weight $i$, for $i<a$, and take the coefficient ideal $C(\cI,a)$ generated by monomials in $\cD^{\leq a-1}(\cI),\dotsc,\cD^{\leq 1}(\cI),\cI$ of weighted degree $\geq a!$. Concretely,
		$$C(\cI,a) = \sum_{\sum i\cdot b_i \geq a!} \cD^{\leq a-1}(\cI)^{b_1} \
        \cdots \ \cD^{\leq 1}(\cI)^{b_{a-1}}\cdot \cI^{b_a}.$$
\end{definition}

\begin{proposition}
Formation of coefficient ideals is functorial in smooth morphisms: with notation as in Proposition~\ref{P:order-and-max-contact-under-smooth-morphisms}, $C(\cI\cO_{\widetilde{Y}},a) = C(\cI,a) \cO_{\widetilde{Y}}$. 
\end{proposition}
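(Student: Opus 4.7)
The plan is to reduce the claim to the already-established functoriality of derivative ideals under smooth morphisms, namely $\cD_Y^{\leq k}(\cI)\cO_{\widetilde{Y}} = \cD_{\widetilde{Y}}^{\leq k}(\cI\cO_{\widetilde{Y}})$, by a formal unfolding of the definition of $C(\cI,a)$.

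The key ancillary observation is that the ideal-pullback operation $\cJ \mapsto \cJ\cO_{\widetilde{Y}}$ commutes with finite products and with arbitrary sums of ideals. Both facts follow immediately from the description of these operations in terms of generators and require no hypothesis on the morphism $f$ (not even flatness). With these compatibilities in hand, I would apply $(-)\cO_{\widetilde{Y}}$ term by term to the defining expression
$$C(\cI,a) = \sum_{\sum i\cdot b_i \geq a!} \cD^{\leq a-1}(\cI)^{b_1} \cdots \cD^{\leq 1}(\cI)^{b_{a-1}} \cdot \cI^{b_a}.$$
Each factor of the form $\cD_Y^{\leq k}(\cI)$ becomes $\cD_{\widetilde{Y}}^{\leq k}(\cI\cO_{\widetilde{Y}})$ by the derivative functoriality; products distribute over the pullback; and the overall sum is preserved. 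The resulting expression matches the definition of $C(\cI\cO_{\widetilde{Y}},a)$ on the nose.

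No real obstacle is anticipated — this is a formal corollary of the derivative-ideal functoriality recorded just above. The only mild point worth flagging is that the sum defining $C(\cI,a)$ is indexed by an infinite set of tuples $(b_1,\ldots,b_a)$ with $\sum i\, b_i \geq a!$, but sum-of-ideals is compatible with ideal pullback for arbitrary indexing (and on a Noetherian scheme the sum stabilizes in any case), so this poses no difficulty.
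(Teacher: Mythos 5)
Your proof is correct and is precisely the formal argument the paper has in mind: the paper states this proposition without proof as an immediate consequence of the derivative-ideal functoriality $\cD_Y^{\leq k}(\cI)\cO_{\widetilde{Y}} = \cD_{\widetilde{Y}}^{\leq k}(\cI\cO_{\widetilde{Y}})$, and you correctly supply the missing elementary observations that ideal pullback commutes with products and (arbitrary) sums of ideals. Your remark that these compatibilities need no hypothesis on $f$ is accurate, so all the work is indeed concentrated in the derivative-ideal functoriality, which is where smoothness enters.
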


\begin{remark} An alternative approach replaces ideals, coefficient ideals, and centers  by $\QQ$-ideals, equivalently Rees algebras, see \cite{Wlodarczyk-cobordant}. In this approach the coefficient $\QQ$-ideal is an improvement of $\cI$ as an approximation of the final blow-up center, and a step towards its construction. This is reminiscent of ideas proposed in the work \cite{KM16}.   
\end{remark}

\subsection{Invariant and blow-up center}\label{Sec:invariant-and-center}

Assume $\ord_p(\cI)  = a_1$ and  $x_1$ is a maximal contact element for $\cI$ at $p$. If $\cI_p = (x_1^{a_1})$ we are basically done: we define the invariant of the ideal at $p$ to be $\inv_p(\cI) = (a_1)$, and we define the center as $J= (x_1^{a_1})$.
Otherwise we define  $\cI[2] = C(\cI,a)|_{V(x_1)}$, the restriction. By assumption it is not the zero ideal. We may now invoke induction, so that invariants and centers are already defined on $V(x_1)$.

\begin{definition}
Say $\inv_p(\cI[2]) = (b_2,\dotsc, b_k)$, with center $(\bar x_2^{b_2} ,\dotsc , \bar x_k^{b_k})$. Choose \emph{arbitrary} lifts $x_i$ of $\bar x_i$. Remembering that $C(\cI,a_1)$ has weighted degree $a!$ while $\cI$ has weighted degree $a$, we rescale the $b_i$'s by setting $a_i = b_i/(a_1-1)!$ for $i=2,\dotsc,k$. We attach to $\cI$ a local invariant at $p$: $$\inv_p(\cI) = (a_1,a_2,\dotsc,a_k) = \left(a_1,\frac{1}{(a_1-1)!} \cdot \inv_p(C(\cI,a_1)|_{V(x_1)})\right)$$ and a blow-up center around $p$: $$J_{\cI,p} = (x_1^{a_1},x_2^{a_2} ,\dotsc,x_k^{a_k}),$$ a valuative $\QQ$-ideal\footnote{See \cite[Section 2]{ATW-weighted} for definition and properties.} on a neighbourhood of $p$. Note that $a_1 \leq a_2 \leq \dotsm \leq a_k$. We order the set of all possible invariants lexicographically, with the caveat that we consider truncations of such sequences larger, i.e. $(a_1,a_2,\dotsc,a_k) < (a_1,a_2,\dotsc,a_{k-1})$. One can show this set is well-ordered, see \cite[Section 5.1]{ATW-weighted}. We have the key result:

\end{definition}

\begin{proposition}[{See \cite[Theorems 5.1.1 and 5.3.1]{ATW-weighted}}]\label{P:well-definedness-and-functoriality}\
	\begin{enumerate}
		\item The invariant $\inv_p(\cI)$ is independent of choices. 
        \item The center $J_{\cI,p}$ is also independent of choices as valuative $\QQ$-ideal. 
            \item The invariant $\inv_p(\cI)$ is upper semi-continuous on $Y$.
		\item If $\widetilde{Y} \to Y$ is smooth, $\widetilde{\cI} = \cI \cO_{\widetilde{Y}}$, and $\widetilde{p} \mapsto p$, then $$\inv_p(\cI) = \inv_{\widetilde{p}}(\widetilde{\cI})$$ and $$J_{\widetilde{\cI},\widetilde{p}} = J_{\cI,p}\cO_{\widetilde{Y},\widetilde{p}}.$$
	\end{enumerate}
	\end{proposition}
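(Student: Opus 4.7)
The plan is to prove all four assertions simultaneously by induction on the length $k$ of the invariant (equivalently on the codimension in $Y$ of the vanishing locus of $x_1, \dotsc, x_k$). The base case $k=1$ is $\cI_p = (x_1^{a_1})$, for which $a_1 = \ord_p(\cI)$ is intrinsic and the valuative $\QQ$-ideal $(x_1^{a_1})$ coincides with $\cI_p$ itself, giving (1) and (2) for free. Both functoriality (4) and upper semi-continuity (3) in this base case reduce directly to the corresponding properties of $\ord_p(\cI)$ already recorded in Proposition~\ref{P:order-and-max-contact-under-smooth-morphisms} and the discussion surrounding it.

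\textbf{Dimension reduction in the inductive step.} Suppose the proposition holds for ideals whose invariant has length $<k$. Given a maximal contact $x_1$ for $\cI$ at $p$, the coefficient-restriction ideal $\cI[2] = C(\cI,a_1)|_{V(x_1)}$ lives on the smooth subvariety $V(x_1)$ and has an invariant of length $k-1$, so by induction $\inv_{\bar p}(\cI[2])$ is well-defined and functorial with canonical valuative $\QQ$-ideal center $(\bar x_2^{b_2}, \dotsc, \bar x_k^{b_k})$ on $V(x_1)$. The rescaling $a_i = b_i/(a_1-1)!$ and the prepending of $a_1$ are canonical, so it only remains to pin down the dependence on the choice of $x_1$ and on the lifts $x_i$ of $\bar x_i$. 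Independence from the choice of lifts is a direct computation in the valuative $\QQ$-ideal formalism of \cite[Section 2]{ATW-weighted}: two lifts differ by $x_1 \cdot h$ with $h \in \cO_{Y,p}$, and the inequalities $a_1 \leq a_i$ force every cross term appearing after binomial expansion of $(x_i + x_1 h)^{a_i}$ to have $J$-order at least $1$, hence to lie already in the ideal generated by $x_1^{a_1}, \dotsc, x_i^{a_i}$ in the valuative sense.

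\textbf{The main obstacle: independence from maximal contact.} The core difficulty is showing that two maximal contacts $x_1, x_1'$ for $\cI$ at $p$ produce the same invariant and the same valuative $\QQ$-ideal center. The strategy is to exhibit a formal (or étale) automorphism $\sigma$ of $Y$ near $p$ carrying $x_1$ to $x_1'$ and preserving the filtration $\cI \subset \cD^{\leq 1}(\cI) \subset \cdots$, hence in particular $C(\cI,a_1)$. Such a $\sigma$ is constructed as the formal exponential of a carefully chosen derivation $\partial$ on $\cO_{Y,p}$ satisfying $\partial x_1 = x_1' - x_1$ and $\partial \cI \subset \cI$; its existence uses precisely that $x_1 - x_1' \in \cD^{\leq a_1 - 1}(\cI)_p$ and that both $x_1, x_1'$ have a unit derivative. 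One then combines this with the routine extension of Proposition~\ref{P:order-and-max-contact-under-smooth-morphisms} to formal/étale isomorphisms — routine because the derivative ideals $\cD^{\leq j}(\cI)$ are intrinsic to $\cO_Y$ — so that the inductive hypothesis applied to $\sigma$ identifies the invariants and centers obtained using $x_1$ and $x_1'$. Making this derivation-and-flow construction compatible with valuative $\QQ$-ideals and with smooth base change is the longest and most delicate ingredient, and is where the hard work of \cite{ATW-weighted} actually takes place.

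\textbf{Functoriality and upper semi-continuity.} Once well-definedness is established, functoriality for a smooth $f \: \widetilde Y \to Y$ follows directly: by Proposition~\ref{P:order-and-max-contact-under-smooth-morphisms}, $f^* x_1$ is a maximal contact at $\widetilde p$, the induced map $V(f^*x_1) \to V(x_1)$ is smooth, and $C(\cI,a_1)$ pulls back to $C(\cI\cO_{\widetilde Y}, a_1)$, so the inductive hypothesis applied on $V(x_1)$ yields equality of the tail of the invariant and of the tail of the center; prepending $a_1 = \ord_{\widetilde p}(\cI\cO_{\widetilde Y})$ and raising to the appropriate power of $x_1$ versus $f^*x_1$ finishes (4). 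Upper semi-continuity is handled similarly: the locus $\{p' : \ord_{p'}(\cI) = a_1\}$ is open in $\{\ord \geq a_1\}$, and on it a single maximal contact $x_1$ remains valid, so upper semi-continuity of $\inv_{\bar{p'}}(\cI[2])$ on $V(x_1)$ from the inductive hypothesis transfers to $\cI$ at $p'$; on the complementary stratum $\ord$ drops and the truncation-is-larger convention on $\Gamma$ makes the invariant strictly smaller, completing the step.
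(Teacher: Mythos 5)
The paper does not prove this proposition; it simply cites \cite[Theorems 5.1.1 and 5.3.1]{ATW-weighted}, so there is no ``paper's own proof'' to compare against. What I can do is assess whether your sketch is a faithful account of how that cited argument goes, and there is one genuine gap worth flagging.

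Your overall architecture --- induction on the length $k$, dimension drop via restriction to $V(x_1)$, independence of lifts by a valuation computation, and independence of maximal contact via a formal automorphism --- is indeed the W{\l}odarczyk--Koll\'ar route that \cite{ATW-weighted} follows. The lift-independence argument is fine: since $a_1 \le a_i$ and hence $w_1 \ge w_i$ in the normalization, replacing $x_i$ by $x_i + x_1 h$ does not change the $J$-valuation, so the valuative $\QQ$-ideal is unchanged.

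The gap is in the maximal-contact step. You claim that one can choose a derivation $\partial$ with $\partial x_1 = x_1' - x_1$ and $\partial \cI \subset \cI$, so that $\sigma = \exp(\partial)$ preserves $\cI$, hence preserves the whole filtration $\cI \subset \cD^{\le 1}(\cI) \subset \cdots$ and in particular $C(\cI,a_1)$. But there is no reason for such a $\partial$ to exist: for the obvious candidate $\partial = (x_1' - x_1)\nabla$ with $\nabla x_1 = 1$, one only gets $\partial f \in \cD^{\le a_1 - 1}(\cI)\cdot \cD^{\le 1}(\cI)$, which is not contained in $\cI$ (the weights $1$ and $a_1 - 1$ sum only to $a_1$, far short of the threshold $a_1!$). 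In general two maximal contacts are \emph{not} related by a formal automorphism preserving $\cI$ itself. What is true --- and what the actual argument needs --- is that the automorphism can be chosen to preserve the \emph{coefficient ideal} $C(\cI,a_1)$. This relies on the special algebraic structure of $C(\cI,a_1)$: it is ``MC-invariant'' (in Koll\'ar's terminology; see \cite[\S 3.5, in particular Theorems 3.92 and 3.97]{Kollar}), roughly that $\cD^{\le a_1 - 1}(C(\cI,a_1)) \cdot C(\cI,a_1) \subset C(\cI,a_1)^{\,\text{rescaled}}$, which is exactly what makes the exponential of the relevant derivation preserve $C(\cI,a_1)$ even though it does not preserve $\cI$. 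This MC-invariance of coefficient ideals is the real algebraic engine behind Proposition~\ref{P:properties-of-inv-and-centers} and the cited theorems in \cite{ATW-weighted}, and it should be the thing your sketch appeals to rather than $\partial\cI \subset \cI$.

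Once the automorphism preserving $C(\cI,a_1)$ is in place, the inductive step compares $\inv$ and $J$ computed from $C(\cI,a_1)|_{V(x_1)}$ and $C(\cI,a_1)|_{V(x_1')}$ rather than from $\cI$ restricted, and closes via Proposition~\ref{P:properties-of-inv-and-centers}(2)--(3), which translates between $\inv_p(\cI)$ and $\inv_p(C(\cI,a_1))$. Your functoriality and upper-semicontinuity paragraphs are fine as written and do match the standard argument.
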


This implies \emph{gluing} of centers, as well as \emph{equivariance} of centers. 

Coefficient ideals possess remarkable properties, formalized by W{\l}o\-darczyk and Koll\'ar \cite{Kollar}, showing they faithfully retain information in $\cI$ yet are more homogeneous. We mention a few properties here:

\begin{proposition}[{See \cite[Corollary 5.3.3]{ATW-weighted}}]\label{P:properties-of-inv-and-centers}
Assume $\ord_p(\cI) = a_1$, with maximal contact $x_1$ for $\cI$ at $p$. Then:
	\begin{enumerate} 
		\item $x_1$ is maximal contact for $C(\cI,a_1)$ at $p$.
		\item $\inv_p(C(\cI,a_1)) = (a_1-1)! \cdot \inv_p(\cI)$, and $\inv_p(\cI^k) = k \cdot \inv_p(\cI)$ for every integer $k \geq 1$.
            \item $J_{C(\cI,a_1),p} = J_{\cI,p}^{(a_1-1)!}$.
\end{enumerate}
\end{proposition}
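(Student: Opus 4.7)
The plan is to prove (1), (2), and (3) together by strong induction on the length $k$ of $\inv_p(\cI) = (a_1,\dotsc,a_k)$, with part (1) serving as the prerequisite that unlocks the recursive definitions.

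For (1), I would first compute $\ord_p C(\cI,a_1) = a_1!$. Each derivative ideal $\cD^{\leq a_1-i}(\cI)$ has order exactly $i$ at $p$, since $\cD^{\leq j}\cD^{\leq a_1-i}(\cI) = \cD^{\leq j+a_1-i}(\cI)$ fills $\cO_{Y,p}$ precisely when $j\geq i$. Super-multiplicativity of order under products---derived from Leibniz, since a derivative of order $<a+b$ on a product from ideals of orders $a,b$ at $p$ splits into strictly lower-order derivatives of each factor, at least one of which must vanish at $p$---forces every monomial generator $\cD^{\leq a_1-1}(\cI)^{b_1}\cdots \cI^{b_{a_1}}$ of $C(\cI,a_1)$ to have order $\geq \sum_i i b_i \geq a_1!$ at $p$, with equality attained by $\cD^{\leq a_1-1}(\cI)^{a_1!}$. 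Since $x_1^{a_1!}$ lies in this last ideal, applying a derivation $\nabla$ with $\nabla x_1 = 1$ a total of $a_1!-1$ times yields $a_1!\cdot x_1$, so $x_1\in \cD^{\leq a_1!-1}(C(\cI,a_1))_p$ and $x_1$ is a maximal-contact element for $C(\cI,a_1)$.

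For (2), unwinding the recursive definition via (1) yields
\[\inv_p(C(\cI,a_1)) = \Bigl(a_1!,\;\tfrac{1}{(a_1!-1)!}\inv_p\bigl(C(C(\cI,a_1),a_1!)|_{V(x_1)}\bigr)\Bigr),\]
while $(a_1-1)!\cdot \inv_p(\cI) = \bigl(a_1!,\;\inv_p(C(\cI,a_1)|_{V(x_1)})\bigr)$, so the leading entries agree and the claim reduces to
\[\inv_p\bigl(C(C(\cI,a_1),a_1!)|_{V(x_1)}\bigr) = (a_1!-1)!\cdot \inv_p\bigl(C(\cI,a_1)|_{V(x_1)}\bigr).\]
The hardest step is to relate the outer coefficient ideal $C(C(\cI,a_1),a_1!)|_{V(x_1)}$ to the inner ideal $\cI[2] := C(\cI,a_1)|_{V(x_1)}$: I expect the right comparison to be that their invariants agree up to the scalar $(a_1!-1)!$, equivalently that $C(C(\cI,a_1),a_1!)|_{V(x_1)}$ has the same invariant as $\cI[2]^{(a_1!-1)!}$. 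The underlying technical input is a commutation property of the W{\l}odarczyk--Koll\'ar coefficient-ideal machinery, expressing that when $x_1$ is maximal contact for an ideal $\cK$, tangential derivatives along $V(x_1)$ compute the derivative ideals of $\cK|_{V(x_1)}$. Combined with the second part of (2), the inductive hypothesis applied on $V(x_1)$ to $\cI[2]$---whose invariant has strictly shorter length---then delivers the scaling by $(a_1!-1)!$. The identity $\inv_p(\cI^k) = k\cdot \inv_p(\cI)$ proceeds analogously: $\ord_p\cI^k = ka_1$ by multiplicativity of order; $x_1$ remains a maximal contact element for $\cI^k$ via an explicit Leibniz expansion of $\nabla^{ka_1-1}(f^k)$ where $\nabla^{a_1-1}f = x_1$; and the tails match after the corresponding commutation of $C(\cI^k, ka_1)$ with restriction to $V(x_1)$.

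Statement (3) is then a formal corollary of (2). Writing $J_{\cI,p} = (x_1^{a_1}, x_2^{a_2},\dotsc, x_k^{a_k})$, the uniform scaling of every entry of $\inv_p(\cI)$ by $(a_1-1)!$ multiplies each exponent $a_i$ by $(a_1-1)!$, giving
\[J_{C(\cI,a_1),p} = (x_1^{a_1(a_1-1)!}, \dotsc, x_k^{a_k(a_1-1)!}) = J_{\cI,p}^{(a_1-1)!}\]
as valuative $\QQ$-ideals, the last equality expressing that Rees-algebra powers of a monomial $\QQ$-ideal scale each exponent uniformly. The principal obstacle throughout the argument lies in the commutation of coefficient ideals with restriction to maximal contact; the factorial bookkeeping then falls out mechanically from the recursive definition of $\inv_p$.
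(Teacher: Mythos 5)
The paper does not prove this proposition; it is quoted from \cite[Corollary 5.3.3]{ATW-weighted}, so there is no in-paper proof to compare against. That said, your outline has the right architecture---strong induction on the length of $\inv_p(\cI)$, proving all three parts simultaneously---and part (1) is essentially correct and complete: the computation $\ord_p C(\cI,a_1)=a_1!$ via super-additivity of order under products, together with the monomial generator $\cD^{\leq a_1-1}(\cI)^{a_1!}$, and the observation that iterating $\nabla$ on $x_1^{a_1!}$ recovers a unit multiple of $x_1$, all check out. (One small slip: applying $\nabla$ to $x_1^{a_1!}$ a total of $a_1!-1$ times yields $(a_1!)!\cdot x_1$ rather than $a_1!\cdot x_1$, but in characteristic $0$ this is immaterial.)

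The gap is in part (2), and you flag it yourself: ``I expect the right comparison to be\ldots'', ``the underlying technical input is a commutation property\ldots''. That commutation is exactly where the content of \cite[Corollary 5.3.3]{ATW-weighted} lives, and it cannot be treated as an expectation. The argument there is organized around the chain of inclusions $\cI^{(a_1-1)!}\subset C(\cI,a_1)\subset \cD^{\leq a_1-1}(\cI)^{(a_1-1)!}$ together with a notion of \emph{equivalent} ideals (ideals generating the same derivative ideals, hence the same invariants and centers). From the left inclusion and the inductively-assumed power rule one gets one inequality on invariants; the reverse inequality requires the right inclusion and the equivalence machinery, not merely the recursive definition. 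Similarly, your Leibniz expansion for $\cI^k$ shows that \emph{some} element congruent to $x_1$ modulo $\m_p^2$ is maximal contact for $\cI^k$, not $x_1$ itself; this is harmless for $\inv_p$ (which is choice-independent by Proposition~\ref{P:well-definedness-and-functoriality}(1)) but must be noted if you want to reuse $x_1$ when tracking the center. Finally, (3) is not a ``formal corollary'' of (2): part (2) controls the invariant but not the valuative $\QQ$-ideal $J$, and you need to carry the center, not just the invariant, through the induction and invoke choice-independence of $J$. The framework is sound; the load-bearing commutation step must be proved, not anticipated.
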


In part (3) of the proposition, one raises each generator of the valuative $\QQ$-ideal $J_{\cI,p}$ by the power $(a_1-1)!$. That is, if $J_{\cI,p} = (x_1^{a_1},x_2^{a_2} ,\dotsc,x_k^{a_k})$, then $J_{\cI,p}^{(a_1-1)!} = (x_1^{a_1!},x_2^{a_2(a_1-1)!} ,\dotsc,x_k^{a_k(a_1-1)!})$.

\subsection{Smooth weighted blow-ups}\label{Sec:smooth-weighted-blow-ups}

\subsubsection{Global presentation}
Let $\maxinv(\cI) = (a_1,a_2,\dotsc,a_k)$, and let $J = J_\cI$ denote the center obtained by gluing the local centers $J_{\cI,p} = (x_1^{a_1},x_2^{a_2} ,\dotsc , x_k^{a_k})$ around points $p \in Y$ with $\inv_p(\cI) = \maxinv(\cI)$. We recall briefly a presentation of the stack theoretic weighted blow-up associated to $J =(x_1^{a_1},x_2^{a_2} ,\dotsc , x_k^{a_k})$, see \cite{Quek-Rydh, Wlodarczyk-cobordant1}. First, we define 
$$ (a_1,\dotsc, a_k) = \ell (w_1^{-1},\dotsc,w_k^{-1}),$$ where $\ell,w_i$ all integers and $\gcd(w_1,\dotsc,w_k) = 1$, and 
$$\oJ = (x_1^{1/w_1},x_2^{1/w_2} ,\dotsc , x_k^{1/w_k}).$$ Considering $\oJ$ as a valuative $\QQ$-ideal, for any $k\in \ZZ$ we let $\cJ_k = \oJ^k\cap \cO_Y$ and write $$\cA_J := \bigoplus_{k\in \ZZ} \cJ_k,$$ with its ``irrelevant" or ``vertex" ideal $$\cA_{J+}:= \left(\bigoplus_{k>0} \cJ_k\right)$$ generated by terms of positive degree, and morphism $$B_J:= \Spec_Y\cA_J \to \AA^1$$ induced by the grading. Writing $s \in \cI_{-1}\simeq \cO_Y$ for the section corresponding to $1 \in \cO_Y$, the locus $$C_J := V(s) = \Spec_Y \left(\bigoplus_{k\geq 0} \cJ_k/\cJ_{k+1}\right) \subset B_J$$ is the weighted normal cone of $\oJ$, and $B_J$ is the degeneration of $Y$ to the weighted normal cone. Set $B_{J+} := B_J \setminus V(\cA_{\oJ+})$ and $C_{J+} := C_J \setminus V(\cA_{\oJ+})$.

We define the weighted blow-up along $\overline{J}$ to be $$Y'= Bl_\oJ(Y) := [B_{J+} /\GG_m] \to Y$$ with exceptional divisor $[C_{J+}/\GG_m]$.

\subsubsection{Local equations}  A local presentation of this construction is desirable, for which we use an open set where $(x_1,\dotsc,x_k)$ can be completed to local parameters $(x_1,\dotsc,x_n)$ so that $Y \to \AA^n$ is \'etale. We continue to follow \cite{Quek-Rydh}, \cite{Wlodarczyk-cobordant1}.

In this case $$\cA_J =  \cO_Y[s,x_1'\dotsc,x_k']/(x_1-s^{w_1} x_1',\dotsc,x_1-s^{w_k} x_k'),$$ on which $\GG_m$ with parameter $t$ acts via $$t\cdot (s,x_1'\dotsc,x_k') = (t^{-1}s,t^{w_1}x_1'\dotsc,t^{w_k}x_k').$$ In other words, $s$ appears in degree $-1$, and $x_i'$ in degree $w_i$, as the presentation above suggests.

The exceptional locus $V(s)$ in $B_J = \Spec_Y(\cA_J) $ is 
$$\Spec_Y \left(\cO_Y/(x_1,\dotsc,x_k)\left[x_1'\dotsc,x_k'\right]\right),$$ 
which is an $\AA^k$-bundle over the center $V(J) = \Spec_Y \left(\cO_Y/(x_1,\dotsc,x_k)\right)$. We note the resulting presentation of  the vertex ideal $\cA_{\oJ+} = (x_1',\dotsc,x_k')$. Its vanishing locus  is simply $$\Spec_Y \left(\cO_Y/(x_1,\dotsc,x_k)\left[s\right]\right)\quad \simeq \quad V(J) \times \AA^1.$$

\begin{figure}[h]
 \includegraphics[height=1.5in]{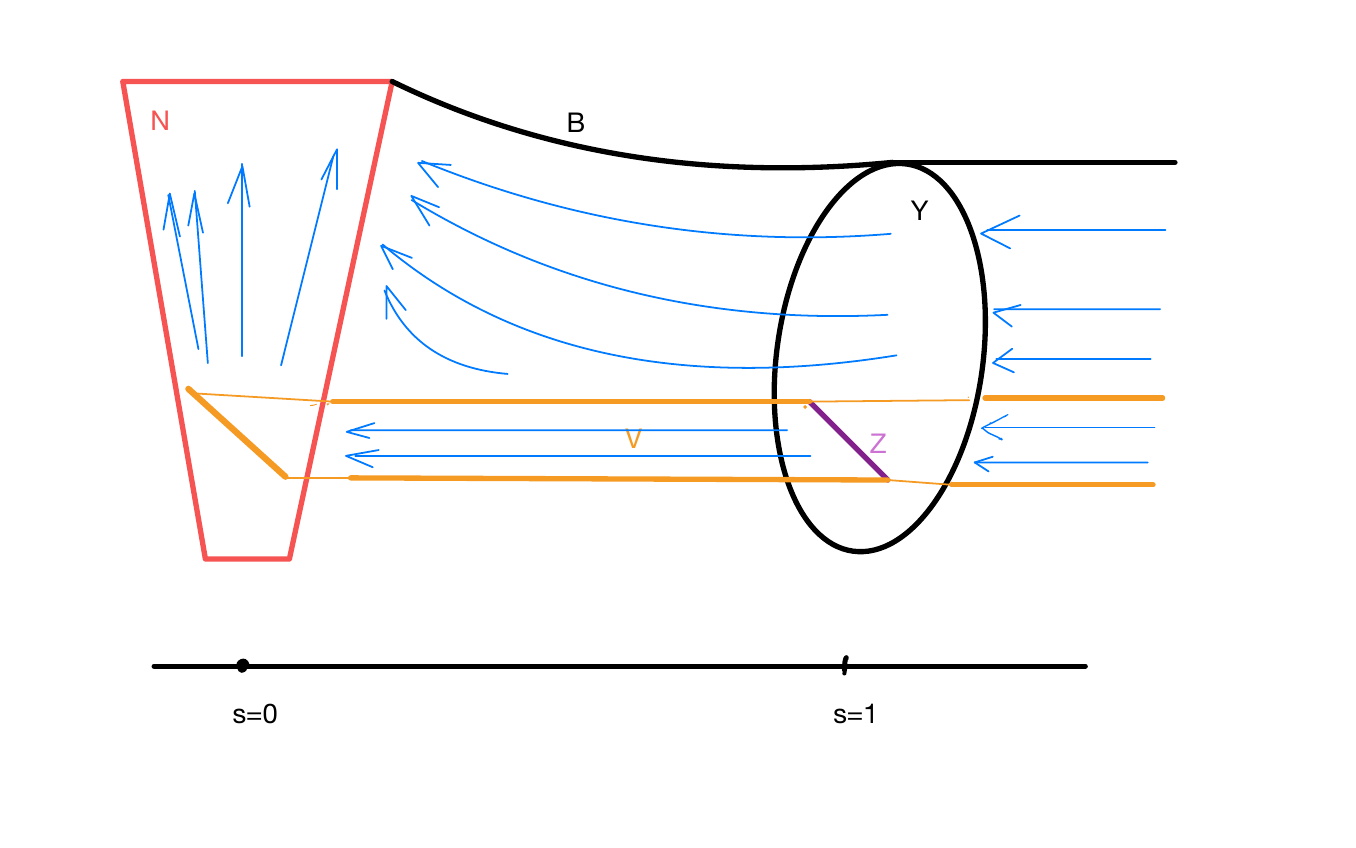} \includegraphics[height=1.5in]{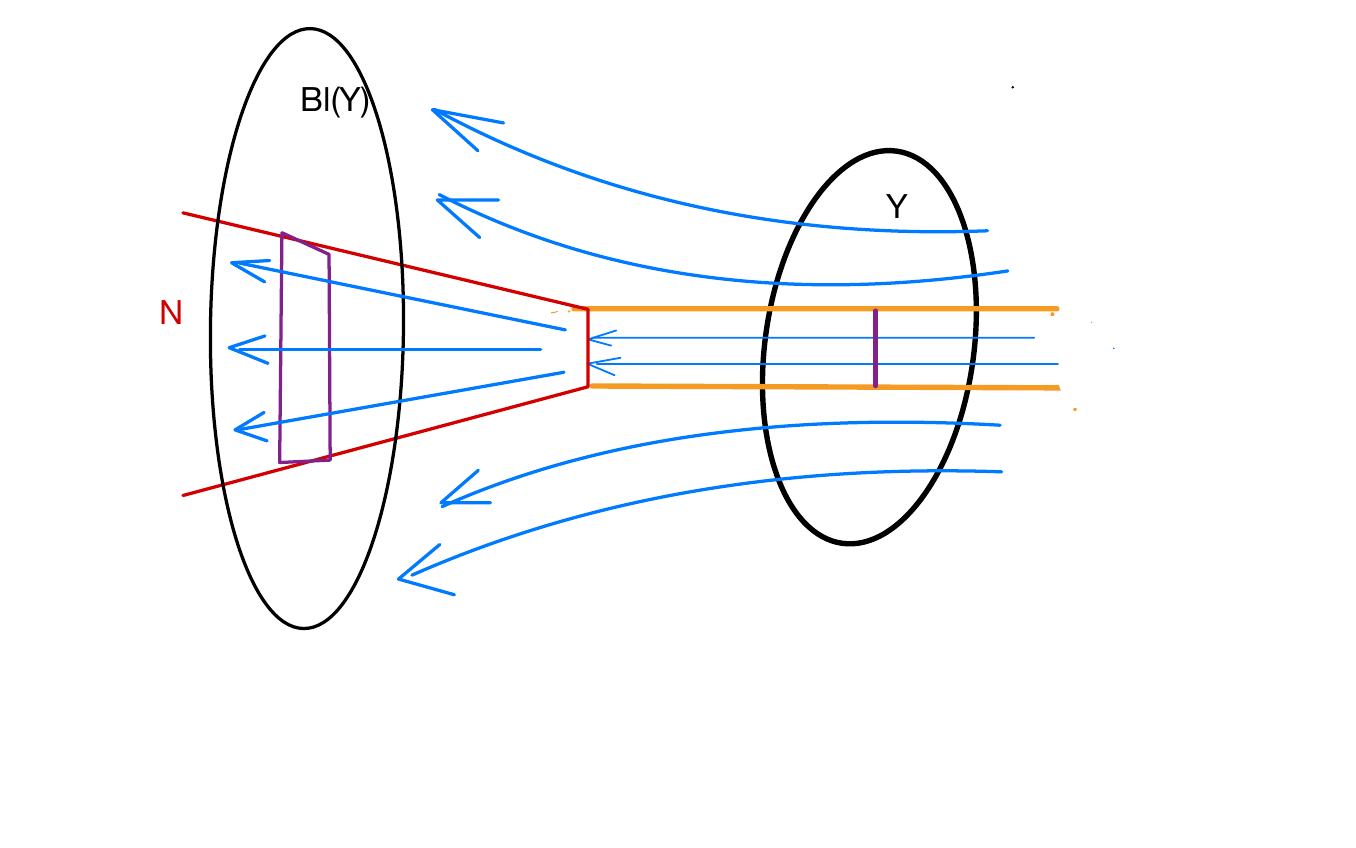}
\caption{Two viewpoints of $B$: on the left, the degeneration to the normal cone of $Y$. On the right, a view from the northeast shows it as a birational cobordism \cite{Wlodarczyk-birational-cobordism}, a link between $Y$ and its blowup. The vertical projection can be viewed as the momentum map $(|s|^2 - \sum w_i |x_i'|^2)/2$, but we digress.}
\end{figure}

\subsection{The invariant drops}\label{Sec:invariant-drops}

The main result of \cite{ATW-weighted} shows the following: 

\begin{theorem}[{See \cite[Theorem 6.2.1]{ATW-weighted}}]\label{Th:invariant drops}
Let $\cI' \subset \cO_{Y'}$ denote the weak transform of $\cI$ under the weighted blow-up $Y' \to Y$ along $\oJ$. Then one has $$\maxinv(\cI') < \maxinv(\cI).$$
After finitely many iterations, one deduces principalization: namely, there is a sequence of smooth weighted blow-ups $Y^{(n)} \to Y^{(n-1)} \to \dotsm \to Y' \to Y$ so that the pullback of $\cI$ to $Y^{(n)}$ is the ideal of the locally principal exceptional divisor on $Y^{(n)}$.
\end{theorem}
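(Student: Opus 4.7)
The plan is an induction on the length $k$ of $\maxinv(\cI) = (a_1,\dotsc,a_k)$, run in parallel with an inductive argument on ambient dimension via restriction to the maximal contact hypersurface. The assertion is local on $Y'$, so I would fix $p \in Y$ with $\inv_p(\cI) = \maxinv(\cI)$, a maximal contact element $x_1$ for $\cI$ at $p$ completed to local parameters $x_1,\dotsc,x_n$, and the associated center $J_{\cI,p} = (x_1^{a_1},\dotsc,x_k^{a_k})$. Points $p' \in Y'$ off the exceptional divisor are controlled by \'etale functoriality of the invariant (Proposition~\ref{P:well-definedness-and-functoriality}(4)), so all the work concentrates at points of the exceptional divisor. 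I would analyze the blow-up chart by chart using the explicit local presentation of Section~\ref{Sec:smooth-weighted-blow-ups}, in which the exceptional parameter is $s$ and the transformed coordinates are $x_i' = x_i/s^{w_i}$.

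The base case $k=1$ is immediate: the convention on truncated sequences forces $\cI_p = (x_1^{a_1})\cdot(\text{unit})$, the center $\oJ$ is $(x_1)$, and the weak transform equals $\cO_{Y'}$ locally, so $\maxinv$ strictly drops. For the inductive step with $k\geq 2$, I would use the defining recursion
\[
\inv_p(\cI) = \left(a_1,\ \tfrac{1}{(a_1-1)!}\cdot \inv_p\bigl(C(\cI,a_1)|_{V(x_1)}\bigr)\right),
\]
together with the identity $J_{C(\cI,a_1),p} = J_{\cI,p}^{(a_1-1)!}$ from Proposition~\ref{P:properties-of-inv-and-centers}(3). The latter ensures that the weighted blow-up of $Y$ along $\oJ$ induces, upon restriction to the proper transform of the smooth hypersurface $V(x_1)$, precisely the weighted blow-up of $V(x_1)$ associated to the coefficient ideal, so the inductive hypothesis in lower dimension becomes applicable there.

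The technical crux, and the main obstacle, is to establish in each chart, for every $p' \in Y'$ mapping to $p$:
\begin{enumerate}
\item[(i)] $\ord_{p'}(\cI') \leq a_1$, with equality only if $x_1' = x_1/s^{w_1}$ is a maximal contact element for $\cI'$ at $p'$;
\item[(ii)] when equality holds, $C(\cI',a_1)|_{V(x_1')}$ is the weak transform of $C(\cI,a_1)|_{V(x_1)}$ under the induced blow-up of $V(x_1)$.
\end{enumerate}
The hard part is (ii): under $x_i = s^{w_i} x_i'$ the derivation $\partial/\partial x_i$ pulls back to $s^{-w_i}\partial/\partial x_i'$, so derivative ideals acquire explicit powers of $s$, and one must track carefully the exceptional factors produced by the weak transform, the derivatives, and the summation defining the coefficient ideal, and verify they all balance. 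The homogeneity of $C(\cI,a_1)$ — its weighted degree $a_1!$ and the compatibility with maximal contact from Proposition~\ref{P:properties-of-inv-and-centers} — is what makes the bookkeeping close. Granting (i) and (ii), the inductive hypothesis on $V(x_1)$ yields $\inv_{p'}(C(\cI',a_1)|_{V(x_1')}) < \inv_p(C(\cI,a_1)|_{V(x_1)})$; rescaling by $1/(a_1-1)!$ gives $\inv_{p'}(\cI') < \inv_p(\cI)$ lexicographically, while if $\ord_{p'}(\cI') < a_1$ the first entry of the invariant already drops.

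For the principalization statement, the set of possible invariants is well-ordered, so a strictly decreasing sequence $\maxinv(\cI^{(m+1)}) < \maxinv(\cI^{(m)})$ terminates in finitely many iterations. Termination means $\cI^{(n)} = \cO_{Y^{(n)}}$, and since each weighted blow-up only accumulates a locally principal exceptional factor in the total transform, this is exactly the statement that the total transform of $\cI$ on $Y^{(n)}$ is the ideal of a locally principal exceptional divisor.
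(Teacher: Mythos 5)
Your intermediate claim (ii) --- that $C(\cI',a_1)|_{V(x_1')}$ equals the weak transform of $C(\cI,a_1)|_{V(x_1)}$ --- is false as stated, and fixing it requires an ingredient your proposal does not supply. Coefficient ideals do not commute with weak transform at the ideal level. Giraud's lemma (Lemma~\ref{Lem:derivatives-vs-weak-transform}) gives only the inclusions $(\cD^{\leq i}(\cI))' \subset \cD^{\leq i}(\cI')$, and these are in general strict: transforming $\cD^{\leq 1}(\cI)$ strips off $s^{\ell - w_1}$, whereas the derivative $\partial/\partial x_i'$ applied to $\cI'$ for $i>1$ produces the smaller exceptional power $s^{-\ell+w_i}$ with $w_i \leq w_1$, yielding a strictly larger ideal. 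Consequently $C(\cI,a_1)' \subset C(\cI',a_1)$, possibly strictly. The paper circumvents this with a squeeze: from $\cI'^{(a_1-1)!} \subset C(\cI,a_1)' \subset C(\cI',a_1)$ together with the multiplicativity $\inv_p(\cI^k) = k\cdot\inv_p(\cI)$ and $\inv_p(C(\cI,a_1)) = (a_1-1)!\cdot\inv_p(\cI)$ of Proposition~\ref{P:properties-of-inv-and-centers}(2), the invariants of the two ends agree, forcing $\maxinv(C(\cI',a_1)) = \maxinv(C(\cI,a_1)')$ as \emph{invariants}, which is what the induction actually needs. Without that squeeze your inductive step does not close.

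Beyond this gap, your route (chart-by-chart on $Y'$) differs from the paper's. The paper never passes to charts of $Y'$: it works on $B_J$, the degeneration to the weighted normal cone. Since $B_J \setminus C_J \cong \GG_m \times Y$, functoriality gives $\maxinv_{B_J\setminus C_J}(\cI') = \maxinv_Y(\cI)$, and upper semicontinuity gives $\maxinv_{B_J}(\cI') \geq \maxinv_Y(\cI)$; Proposition~\ref{Prop:maximality-in-B} upgrades this to equality with maximal locus exactly the vertex $V(\cA_{J+})$. Deleting that locus to get $B_{J+}$, and using that $B_{J+} \to Y'$ is smooth and surjective, one more application of functoriality transfers the strict drop to $Y'$. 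This replaces all the exceptional-power bookkeeping and patching across charts that you flag as your ``technical crux'' by a single computation on $B_J$, and is the main structural simplification of the paper's treatment relative to the classical chart-based arguments you are reconstructing.
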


We find it useful to re-illuminate this established fact.

We slightly abuse notation, using $\cI'$ for the weak transform of $\cI$ on the degeneration to weighted normal cone $B_J$ as well, i.e. on $B_J$ let us write $\cI\cO_{B_J} =\cI_{C_J}^\ell \cI'$, with $\cI' \not\subset \cI_{C_J}$. This should not cause confusion since it descends to $\cI'$ on $Y'$. In the local presentation above, we have that $\cI\cA_J = (s)^l \cI'$. 

 We note that $B_J \setminus C_J = \GG_m\times Y$, so by functoriality $\maxinv_{B_J \setminus C_J}(\cI') = \maxinv_Y(\cI) = (a_1,\dotsc, a_k)$. Upper semicontinuity implies that $\maxinv_{B_J}(\cI') \geq (a_1,\dotsc, a_k)$. The fundamental reason invariants drop is the following stronger fact:
 
\begin{proposition}\label{Prop:maximality-in-B} We have $$\maxinv_{B_J}(\cI') = \maxinv_Y(\cI) = (a_1,\dotsc, a_k),$$ with maximal locus precisely $V(\cA_{J+}) \subset B_J$.
\end{proposition}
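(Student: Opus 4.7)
My plan: since the inequality $\maxinv_{B_J}(\cI') \geq (a_1,\ldots,a_k)$ is already recorded, I would establish (i) the reverse upper bound $\inv_q(\cI') \leq (a_1,\ldots,a_k)$ for every $q \in B_J$, and (ii) the identification of the locus of equality as $V(\cA_{J+})$. By smooth functoriality of $\inv$ (Proposition~\ref{P:well-definedness-and-functoriality}(4)), I would reduce to an \'etale-local computation near a point $p \in Y$ with $\inv_p(\cI)=(a_1,\ldots,a_k)$, fixing parameters $x_1,\ldots,x_n$ whose first $k$ members realize the iterated maximal contacts defining $J$. Then $B_J$ carries local parameters $s,x_1',\ldots,x_k',x_{k+1},\ldots,x_n$ with projection $\pi\colon B_J\to Y$ given by $x_i=s^{w_i}x_i'$ for $i\leq k$, and $\cI'=s^{-\ell}\cI\cdot\cA_J$ is an honest ideal.

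For the equality on $V(\cA_{J+})$, I would first invoke functoriality on $B_J\setminus C_J\cong\GG_m\times Y$: at every $q\in V(\cA_{J+})\setminus C_J$ whose image in $Y$ lies in the maximal locus of $\inv(\cI)$, one has $\inv_q(\cI')=(a_1,\ldots,a_k)$. Upper-semicontinuity of $\inv$ on $B_J$ propagates this to the closure, so $\inv_{q_0}(\cI')\geq(a_1,\ldots,a_k)$ for every $q_0\in V(\cA_{J+})\cap C_J$; combined with (i) this forces equality throughout $V(\cA_{J+})$.

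For (i) and the strict drop off $V(\cA_{J+})$, I would argue by induction on the length $k$ of the invariant. The base $k=1$ is a direct calculation: $J=(x_1^{a_1})$ with $w_1=1$ and $\ell=a_1$, so $\cI'=((x_1')^{a_1})$ locally, which has invariant exactly $(a_1)$ on $V(x_1')=V(\cA_{J+})$ and is the unit ideal elsewhere. For the inductive step at $q\notin V(\cA_{J+})$, say with $x_1'(q)\neq 0$, I would pass to the open chart $D(x_1')\subset B_J$; there, the invertibility of $x_1'$ together with the relation $x_1=s^{w_1}x_1'$ identifies $D(x_1')$, up to a finite cyclic cover, with a weighted blow-up of $Y$ along a center of length $k-1$ --- naturally the one associated to the coefficient ideal $C(\cI,a_1)$ restricted to $V(x_1)$, after the rescaling recorded in Proposition~\ref{P:properties-of-inv-and-centers}. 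The inductive hypothesis applied to this smaller blow-up then supplies both the upper bound and the strict inequality on $D(x_1')$.

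The main obstacle will be this chart-by-chart reduction: carefully matching $D(x_i')$ with a weighted blow-up of lower center length, and identifying the resulting weak transform with that of the expected coefficient-ideal-derived ideal. The numerical compatibility needed is exactly what Proposition~\ref{P:properties-of-inv-and-centers} provides, so once the geometric identification is set up correctly the remaining bookkeeping is routine.
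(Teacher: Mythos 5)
Your proposal takes a genuinely different route from the paper, and there is a gap in it.

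The paper's proof is built on Giraud's Lemma (Lemma~\ref{Lem:derivatives-vs-weak-transform}), which gives $(\cD^{\leq 1}(\cI))' \subset \cD^{\leq 1}(\cI')$ on all of $B_J$. Iterating it shows $\maxord(\cI') = a_1$ with $x_1'$ a maximal contact for $\cI'$, and then gives the containment $C(\cI,a_1)' \subset C(\cI',a_1)$; combined with $\cI'^{(a_1-1)!} \subset C(\cI,a_1)'$ and Proposition~\ref{P:properties-of-inv-and-centers}(2), this pins the invariant of $C(\cI',a_1)$. The induction is then carried out by \emph{restricting to} $V(x_1') \subset B_J$, which one identifies with the deformation to the weighted normal cone of $\overline{J}|_{V(x_1)}$ inside $V(x_1)$, reducing to the coefficient ideal of length $k-1$. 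Nowhere do you invoke Giraud's Lemma or any substitute for it, yet it is the ingredient that controls derivative ideals of weak transforms and makes $x_1'$ a maximal contact for $\cI'$; without it you have no handle on $\ord(\cI')$.

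The specific step that fails is the chart-by-chart reduction. You propose, at a point $q$ with $x_1'(q) \neq 0$, to pass to $D(x_1') \subset B_J$ and identify it ``up to a finite cyclic cover, with a weighted blow-up of $Y$ along a center of length $k-1$ --- the one associated to $C(\cI,a_1)|_{V(x_1)}$.'' This identification is not correct: $C(\cI,a_1)|_{V(x_1)}$ and its center $J|_{V(x_1)}$ live on the hypersurface $V(x_1)$, not on $Y$, so their associated deformation/blow-up is a modification of $V(x_1)$ and cannot be an open chart of a modification of $Y$ on dimension grounds alone. The open set $D(x_1')$ is simply a chart of the deformation to the normal cone $B_J$; it is not a deformation-to-normal-cone of $Y$ along a smaller center. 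The direction of the induction is therefore reversed relative to what actually works: the paper restricts to the vanishing locus $V(x_1')$, which is precisely where the lower-length structure lives, whereas you pass to its complement. Proposition~\ref{P:properties-of-inv-and-centers} supplies numerical compatibilities between $\cI$, $C(\cI,a_1)$, and the restriction to $V(x_1)$; it does not supply a geometric identification of the chart $D(x_1')$ with a smaller blow-up, and that identification is the hinge of your inductive step. A secondary issue: your item (i) (the upper bound) is only argued off $V(\cA_{J+})$, while the equality on $V(\cA_{J+})$ that you obtain from (ii) uses (i) as an input there; as written, the upper bound on $V(\cA_{J+})$ is never independently established.
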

 
 \begin{proof}[Proof of Theorem \ref{Th:invariant drops} given Proposition \ref{Prop:maximality-in-B}]
 
 Since the maximality locus on $B_J$ is $V(A_{J+})$ and $B_{J+} = B_J \setminus V(A_{J+})$, we have $\maxinv_{B_{J+}}(\cI') < \maxinv_{B_J}(\cI')= \maxinv_Y(\cI) = (a_1,\dotsc, a_k).$ Observe that the quotient map $B_{J+} \to Y'$ is, by definition, smooth and surjective. By functoriality we have $\maxinv_{B_{J+}}(\cI') = \maxinv_{Y'}(\cI')$, hence $\maxinv_{Y'}(\cI')<\maxinv_{Y}(\cI)$ as needed.
 \end{proof}
\begin{remark} It is enough  to prove that $\maxinv_{B_J}(\cI') = \maxinv_Y(\cI)$, the maximal locus being $V(\cA_{J+})$ follows by smoothness and functoriality. However all known proofs treat the center at the same time. It is also enough to show that  $\maxinv_{C_J}(\cI') = \maxinv_Y(\cI)$ by upper semicontinuity. See \cite{Wlodarczyk-cobordant} for these statements and their proofs.\end{remark}





To prove the preceding proposition, we require the following:

\begin{lemma}[Giraud,  see e.g. {
\cite[Lemma 4.5]{Encinas-Villamayor-good},
\cite[Lemma 2.6.2]{Wlodarczyk}, 
\cite[Lemma 3.3]{Bierstone-Milman-funct}, 
\cite[Lemma 6.5.2]{ABTW-foliated}
}]\label{Lem:derivatives-vs-weak-transform}
We have 
$$(\cD^{\leq 1}(\cI))' \subset \cD^{\leq 1}(\cI')$$
\end{lemma}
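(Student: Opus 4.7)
\emph{Proof proposal.} The lemma is étale-local on $B_J$, so I plan to reduce to the local presentation of $\cA_J$ reviewed in Section~\ref{Sec:smooth-weighted-blow-ups}. After choosing an étale chart of $Y$ carrying coordinates $x_1,\dotsc,x_n$ extending the $x_1,\dotsc,x_k$ appearing in the center, the algebra
$$ \cA_J = \cO_Y[s, x_1',\dotsc,x_k']\big/(x_i - s^{w_i}x_i' : i\leq k)$$
is smooth with coordinates $(s, x_1',\dotsc,x_k',x_{k+1},\dotsc,x_n)$, and the map $B_J\to Y$ is encoded by the substitution $x_i\mapsto s^{w_i}x_i'$ for $i\leq k$. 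Every $f\in\cI$ thus pulls back to $\tilde f = s^\ell f'$ with $f'\in\cI'$, where $\ell := \ord_{C_J}(\cI\cA_J)$.

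The heart of the argument is a chain-rule computation on $\cA_J$. Applying $\partial_{x_i'}$ or $\partial_{x_j}$ to the substitution formula, together with the obvious identity $\partial_{x_i'}s = 0$, yields
$$ \widetilde{\partial_{x_i}f} \;=\; s^{\ell - w_i}\,\partial_{x_i'}f' \quad (i\leq k), \qquad \widetilde{\partial_{x_j}f} \;=\; s^{\ell}\,\partial_{x_j}f' \quad (j>k),$$
i.e.\ each pullback of a generator of $\cD^{\leq 1}(\cI)$ factors explicitly as a power of $s$ times an element of $\cD^{\leq 1}_{B_J}(\cI')$.

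To conclude, I would use that in characteristic zero $\cD^{\leq 1}(\cI)$ is generated locally by $\cI$ together with the coordinate derivatives $\partial_{x_i}(\cI)$, so $\cD^{\leq 1}(\cI)\cA_J$ is generated by the pullbacks $\tilde f$, $\widetilde{\partial_{x_i}f}$, $\widetilde{\partial_{x_j}f}$ as $f$ ranges over $\cI$. Setting $m := \ord_{C_J}(\cD^{\leq 1}(\cI)\cA_J)$, so that $(\cD^{\leq 1}(\cI))' = s^{-m}\cD^{\leq 1}(\cI)\cA_J$, the integer $m$ is, by definition, the minimum of $\ord_s$ over these generators; hence $s^{-m}\tilde f$, $s^{-m}\widetilde{\partial_{x_i}f}$, $s^{-m}\widetilde{\partial_{x_j}f}$ are each a nonnegative power of $s$ times one of $f'$, $\partial_{x_i'}f'$, $\partial_{x_j}f'$, and therefore all lie in $\cD^{\leq 1}_{B_J}(\cI')$. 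This yields $(\cD^{\leq 1}(\cI))' \subset \cD^{\leq 1}(\cI')$, as required. The principal obstacle, as far as I anticipate, is purely notational bookkeeping: distinguishing the ambient ideal's $s$-order $m$ from the individual generators' $s$-orders (which shift by the weights $w_i$); once this is unpacked, the chain-rule identity displayed above does all the work, and gluing over étale charts is immediate from functoriality of $\cD^{\leq 1}$ and of the weak transform.
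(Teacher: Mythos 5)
Your chain-rule computation is correct and is essentially the same computation that drives the paper's proof: both work étale-locally on $B_J$, substitute $x_i \mapsto s^{w_i}x_i'$, differentiate in the $x_i'$-directions and the residual directions $x_j$ ($j>k$), and keep track of the resulting powers of $s$. However, your concluding step has a genuine gap, and it is located exactly where the paper invokes the one inequality you never mention: $w_1 \geq w_i$ for all $i$.

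Here is the problem. You define $m := \ord_{C_J}(\cD^{\leq 1}(\cI)\cA_J)$ and assert that, because $m$ is the minimum of $\ord_s$ over the generators, each element $s^{-m}\widetilde{\partial_{x_i}f} = s^{\ell - w_i - m}\,\partial_{x_i'}f'$ is a \emph{nonnegative} power of $s$ times $\partial_{x_i'}f'$. That would need $m \leq \ell - w_i$ for every $i$. But the information you actually extract from $m$ being the minimum is only $m \leq \ord_s(\widetilde{\partial_{x_i}f}) = (\ell - w_i) + \ord_s(\partial_{x_i'}f')$, i.e. $\ell - w_i - m \geq -\ord_s(\partial_{x_i'}f')$, which can be negative. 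In that case $s^{-m}\widetilde{\partial_{x_i}f}$ is still regular, but it is obtained by stripping a positive power of $s$ from an element of $\cD^{\leq 1}(\cI')$, and that ideal is not a priori $s$-saturated; so membership in $\cD^{\leq 1}(\cI')$ does not follow. Your word ``hence'' hides the entire point.

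The paper closes this by not using the $s$-order of $\cD^{\leq 1}(\cI)\cA_J$ at all: it takes the transform $(\cD^{\leq 1}(\cI))'$ to mean $s^{-(\ell - w_1)}\cD^{\leq 1}(\cI)\cA_J$, the controlled transform with prescribed exponent $\ell - w_1$. Then each generator of $(\cD^{\leq 1}(\cI))'$ is $s^{-\ell + w_1}\widetilde{\partial_{x_i}f} = s^{w_1 - w_i}\,\partial_{x_i'}f'$ (with $w_j := 0$ for $j>k$), and since $w_1 = \ell/a_1 \geq \ell/a_i = w_i$ (the $a_i$ are nondecreasing), the exponent $w_1 - w_i$ is nonnegative, giving the inclusion directly into $\cD^{\leq 1}_{B_J/\AA^1}(\cI') \subset \cD^{\leq 1}(\cI')$. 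If you insist on interpreting $(\cD^{\leq 1}(\cI))'$ as the genuine weak transform, you must additionally argue that $\ord_s(\cD^{\leq 1}(\cI)\cA_J) = \ell - w_1$ exactly; this is true in the maximal-contact setting but requires an argument you have not supplied. Either way, the inequality $w_1 \geq w_i$ is the missing ingredient that your proposal needs and currently omits.
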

\begin{proof}[Proof of Lemma]


We work this out on $B_J$ using the local presentation above. We note that if $\cI$ is transformed using $(s)^{-\ell}$ then $\cD^{\leq 1}(\cI)$ is transformed using $(s)^{-\ell+w_1}$, where again $w_ia_i = \ell$.  

We prove a slightly stronger result, which will go some way towards results below: we show that $(\cD^{\leq 1}(\cI))' \subset \cD_{B_J/\AA^1}^{\leq 1}(\cI')$. This suffices since $\cD_{B_J/\AA^1}^{\leq 1}(\cI') \subset \cD^{\leq 1}(\cI')$. It also makes computations easier since we need not take $s$-derivarives.

Plugging in $x_i = s^{w_i} x_i'$ we obtain 
$\cI' = (s^{-\ell} f(s^{w_i} x_i') \mid f \in \cI)$, so that, by the chain rule, 
\begin{eqnarray*} \cD_{B/\AA^1}^{\leq 1}(\cI') &= \left(\frac{\partial}{\partial x'_i}(s^{-\ell} f(s^{w_i} x_i')) \mid f\in \cI, i=1,\dotsc,n\right) \\
& = \left(s^{-\ell} \frac{\partial f}{\partial x_i} (s^{w_i} x_i') \cdot s^{w_i} \mid f\in \cI, i=1,\dotsc,n\right)\\
& = \left(s^{-\ell+w_i} \frac{\partial f}{\partial x_i} (s^{w_i} x_i')\mid f\in \cI, i=1,\dotsc,n\right)
\end{eqnarray*}
On the other hand $\cD^{\leq 1}(\cI) = \left(\frac{\partial f}{\partial x_i}(x_i)\mid f\in \cI, i=1,\dotsc,n\right)$ so that $${\cD^{\leq 1}(\cI)' = \left(s^{-\ell+w_1}\frac{\partial f}{\partial x_i}(s^{w_i}x_i')\mid f\in \cI, i=1,\dotsc,n\right)}.$$
Note that
$w_1\geq w_i$ for all $i$, so
		$$\cD^{\leq 1}(\cI)' = \left(s^{-\ell+w_1}\frac{\partial f}{\partial x_i}(s^{w_i}x_i')\right) \subset \left(s^{-\ell+w_i} \frac{\partial f}{\partial x_i} (s^{w_i} x_i'))\right)=\cD_{B/\AA^1}^{\leq 1}(\cI'),$$
as needed.
\end{proof}

\begin{proof}[Proof of Proposition \ref{Prop:maximality-in-B}] 


The lemma implies inductively that $(\cD^{\leq i}(\cI))' \subset \cD^{\leq i}(\cI')$ for every $i \geq 0$. Two particular consequences are that $$1 \in (\cD^{\leq a_1}(\cI))' \subset \cD^{\leq a_1}(\cI') \qquad \text{ and } \qquad 
x_1'\in (\cD^{\leq a_1-1}(\cI))' \subset \cD^{\leq a_1-1}(\cI'),$$ in particular $\maxord(\cI') = a_1$ with maximal contact $x_1'$. Therefore, to prove the proposition, it remains to check $$\max\inv(C(\cI',a_1)|_{V(x_1')}) = \maxinv(C(\cI,a_1)|_{V(x_1)})$$ with maximal locus $V(\cA_{J+}) \cap V(x_1')$.

Next, a third consequence is that $C(\cI,a_1)' \subset C(\cI',a_1)$. Together with the inclusion $\cI'^{(a_1-1)!} \subset C(\cI,a_1)'$, we conclude from Proposition~\ref{P:properties-of-inv-and-centers}(2) that $$\max\inv(C(\cI',a_1)) = \max\inv(C(\cI,a_1)').$$ By Proposition~\ref{P:properties-of-inv-and-centers}(1), $x_1'$ is a maximal contact for $C(\cI',a_1)$. It is also a maximal contact for $C(\cI,a_1)'$ by the same argument at the start of this proof. Therefore, $$\max\inv(C(\cI',a_1)|_{V(x_1')}) = \max\inv(C(\cI,a_1)'|_{V(x_1')}).$$ 

Finally, by induction applied to $V(x_1') \to V(x_1)$ which is the deformation to weighted normal cone of $\overline{J}|_{V(x_1)}$, we have $$\max\inv(C(\cI',a_1)|_{V(x_1')}) = \max\inv\left(\left(C(\cI,a_1)|_{V(x_1)}\right)'\right) =  \maxinv(C(\cI,a_1)|_{V(x_1)})$$ with maximal locus $V(\cA_{J+}) \cap V(x_1') = V(x_2'|_{V(x_1')},\dotsc,x_k'|_{V(x_1')})$, as desired.
\end{proof}

\subsection{Principalization of monomial ideals}

\label{Sec:monomial ideals} 
We revisit an aspect of the above principalization of ideals in the case of monomial ideals (see \cite{Quek, Abramovich-Quek, Wlodarczyk-cobordant}) with an approach which feeds into the later portion of this paper. Assume now that $(Y,D)$ is a pair with $Y$ a variety $D \subset Y$ a simple normal crossings divisor $Y$, and $\cQ \subset \cO_Y$ a non-unit \emph{monomial} ideal on $(Y,D)$, i.e. the stalks of $\cQ$ are generated by monomials in parameters $t_1,\dotsc,t_d$ such that $D = V(t_1t_2\dotsm t_d)$. Such parameters $t_1,\dotsc,t_d$ are called \emph{monomial parameters} on $(Y,D)$.

For a point $p \in Y$, recall that we have the local invariant $\inv_p(\cQ)$ and the local center $J_{\cQ,p}$. If $\maxinv(\cQ) = (b_1,\dotsc, b_m)$ with rational terms $b_1\leq \cdots \leq b_m$, we have a weighted blow-up center $J = J_\cQ$ on $Y$, locally of the form $(y_1^{b_1},\dotsc, y_m^{b_m})$.\footnote{In the earlier sections, the notation $a_i, x_i$ is used instead of $b_i, y_i$, but we replace the notation in anticipation of discussion in Section~\ref{Sec:general-ideals}: $b_i$ would come up in the second stage in our invariant.} 


We note the following well-known  consequence of functoriality, see for instance \cite{BB}:

\begin{proposition}[Principalization of monomial ideals]\label{Prop:principalize-monomial-ideals} The center $J = J_\cQ$ is toroidal, i.e. the parameters $y_1,\dotsc,y_m$ may be taken as monomials, i.e. taken so that $D = V(y_1y_2 \dotsm y_d)$ with $m \leq d$. The same holds for the local centers $J_{\cQ,p}$.

In particular, $Y' \to Y$ is a toroidal morphism with associated simple normal crossings divisor $D' \subset Y'$, and $\cQ'$ is a monomial ideal on $(Y',D')$. After finitely many iterations, there is a sequence of smooth weighted blow-ups $Y^{(n)} \to Y^{(n-1)} \to \dotsm \to Y' \to Y$ so that the pullback of the ideal $\cI$ to $Y^{(n)}$ is a locally principal monomial ideal on the smooth variety $Y^{(n)}$ carrying a normal crossings divisor $D^{(n)}$.
\end{proposition}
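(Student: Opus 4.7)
The plan is to prove the first assertion by induction on the number $d$ of monomial parameters at $p$, tracking at each inductive stage a choice of maximal contact element that is itself a monomial parameter. The key elementary observation is that partial differentiation with respect to any $t_i$ sends monomials to integer multiples of monomials, so whenever $\cQ$ is monomial in $t_1,\dotsc,t_d$ the iterated derivative ideals $\cD^{\leq j}(\cQ)$ are again monomial, and so is the coefficient ideal $C(\cQ,a_1)$ for $a_1 := \ord_p(\cQ)$. In particular the maximal contact ideal $\cD^{\leq a_1-1}(\cQ)_p$ is monomial; picking any minimal monomial generator $t^\beta$ of it, one further derivative must produce a unit, which forces $t^\beta$ to equal $t_{i_1}$ (up to unit) for some index $i_1$. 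Hence we may take $x_1 := t_{i_1}$ as a monomial maximal contact.

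Restricting to $V(x_1) = V(t_{i_1})$, the remaining parameters $\{t_j\}_{j \neq i_1}$ are monomial parameters on the toroidal pair $(V(x_1), D \cap V(x_1))$, and $C(\cQ,a_1)|_{V(x_1)}$ is monomial in them. Applying the induction hypothesis, its local center is toroidal, generated by powers of some $t_{i_2},\dotsc,t_{i_k}$ with $k-1 < d$; these admit tautological monomial lifts to $Y$. Thus, as a valuative $\QQ$-ideal, $J_{\cQ,p} = (t_{i_1}^{a_1},t_{i_2}^{a_2},\dotsc,t_{i_k}^{a_k})$ is toroidal with $k \leq d$. Proposition~\ref{P:well-definedness-and-functoriality}(2), which is insensitive to the specific choices, then glues the local monomial centers into a global toroidal center $J_\cQ$ on $(Y,D)$.

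For the second assertion, I would invoke the standard fact that the stack-theoretic weighted blow-up of a toroidal center on a toroidal pair is again toroidal. The local chart presentation of $\cA_J$ in Section~\ref{Sec:smooth-weighted-blow-ups}, applied to the monomial generators $t_{i_\nu}$, visibly exhibits $B_J \to \AA^1$ and hence $Y' \to Y$ as a toric/toroidal morphism. The associated simple normal crossings divisor $D'$ is then the union of the strict transform of $D$ and the exceptional divisor. The weak transform $\cQ'$—obtained by dividing the pullback by the appropriate power of the exceptional parameter $s$—remains monomial in the new monomial parameters on $(Y',D')$, so the construction iterates.

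Termination follows immediately from Theorem~\ref{Th:invariant drops}: since $\maxinv$ strictly decreases under each weighted blow-up and takes values in a well-ordered set, after finitely many steps the pullback of $\cQ$ becomes locally principal and monomial, i.e.\ cut out by a monomial in the accumulated exceptional divisors. The main point requiring care is ensuring that the \emph{choice} of maximal contact element and its lifts can always be arranged compatibly with monomiality throughout the recursion; this is precisely what the observation on derivatives of monomials delivers, and it is why one may always impose $y_i = t_{j_i}$ at every stage without breaking the inductive construction of the invariant.
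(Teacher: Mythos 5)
Your proof is correct, but it proceeds by a genuinely different route from the paper's. You run a direct induction, tracking monomiality hands-on through the recursion: derivatives of monomials are (multiples of) monomials, so $\cD^{\leq j}(\cQ)$ and $C(\cQ,a_1)$ stay monomial; a monomial ideal whose $\cD^{\leq 1}$ is the unit ideal must contain a coordinate $t_{i_1}$ among its generators, which supplies a \emph{monomial} maximal contact; restrict, recurse, and lift monomially. The paper instead gives a one-paragraph equivariance argument: reduce \'etale-locally to $\cQ = \phi^*\cQ_0$ with $\cQ_0$ monomial on $\AA^n$, observe $\cQ_0$ is invariant under the torus $T = \Spec k[t_1^{\pm 1},\dotsc,t_n^{\pm 1}]$, and invoke the \emph{equivariance of centers} (a consequence of the functoriality in Proposition~\ref{P:well-definedness-and-functoriality}(4)) to conclude that the center is $T$-invariant, hence generated by $T$-eigenvectors, i.e.\ monomials. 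The paper's route is shorter and sidesteps unwinding the recursion; yours is more explicit and makes transparent \emph{which} coordinate gets chosen as maximal contact at each stage. Two small inaccuracies in your write-up, neither fatal: ``picking any minimal monomial generator'' should be ``some generator'' (not every minimal monomial generator of $\cD^{\leq a_1-1}(\cQ)$ need be a single coordinate — only at least one must be, precisely because one further derivative yields a unit); and the phrase ``the toroidal pair $(V(x_1), D\cap V(x_1))$'' is off since $x_1 = t_{i_1}$ is necessarily a component of $D$ (derivatives with respect to $t_j$, $j>d$, kill $\cQ$), so $V(x_1)\subset D$ and $D\cap V(x_1)$ is not a divisor — but this is harmless, as the invariant and center machinery of Section~\ref{Sec:invariant-and-center} makes no reference to $D$; only the ``toroidality'' conclusion does, and there you only need the remaining $t_j$ to restrict to parameters on $V(x_1)$.
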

\begin{proof} Passing to a local chart we may assume there is an \'etale morphism $\phi:Y \to \AA^n:= \Spec k[t_1, \dotsc, t_n] $ such that $D = \phi^*(V(t_1\cdots t_d))$ for some $1 \leq d \leq n$ and a monomial ideal $\cQ_0\subset \cO_{\AA_n}$ such that $\cQ = \phi^*\cQ_0$. It follows that $\cQ_0$ is invariant under the action of the torus $T :=  \Spec k[t_1^{\pm1}, \dotsc, t_n^{\pm1}]$. Functoriality of centers implies that the center $J_0$ associated to $\cQ_0$ is $T$-invariant, hence its generators may be chosen to be $T$-eigenvectors, namely monomials. Functoriality for \'etale morphisms provides that $J = \phi^*J_0$, which enjoys the same properties, as needed.
\end{proof}

\section{The logarithmic analogues}\label{Sec:toroidal-resolution}

We now replace everything by their logarithmic analogues, first stating Quek's toroidal results in this section. The point is that adding the adjective ``logarithmic" or ``toroidal" at every step works as stated!

\subsection{Logarithmic order, maximal contact, and coefficient ideals}\label{Sec:log-analogues}

Assume $Y$ is provided with a simple normal crossings divisor $D$, giving rise to a logarithmically smooth structure on $Y$ sometimes denoted $(Y|D)$, and sheaves of logarithmic differential operators $\cD^{\leq a}_{(Y|D)}$.

\begin{definition}
We define $\cD^{\leq a}_{(Y|D)}(\cI) \subset \cO_Y$ to be the ideal generated by the collection of $\nabla(f)$, with $\nabla $ a local section of  $\cD_{(Y|D)}^{\leq a}$  and $f$ a local section of $\cI$. This time, the ascending sequence of ideals $$\cI \subset \cD_Y^{\leq 1}(\cI) \subset \cD_Y^{\leq 2}(\cI) \subset \cdots$$ stabilizes at the monomial saturation $\cM_{(Y|D)}(\cI) = \cD^\infty_{(Y|D)}(\cI)$ \cite{Belotto-thesis,Kollar-toroidal,ATW-principalization}, which may or may not be the unit ideal on $Y$.
\end{definition}

\begin{definition}
Given an ideal $\cI \subset \cO_Y$, we define the \emph{logarithmic order of $\cI$ at $p$} as follows: $$\logord_p(\cI) = \min\left\{a: \cD^{\leq a}_{(Y|D)}(\cI)_p = \cO_{Y,p}\right\}$$
with the convention $\logord_p(\cI) = \infty$ if the minimum does not exist, that is, if $\cM_{(Y|D)}(I)_p \neq (1)$. We set $\maxlogord_Y(\cI) = \max_{p \in Y}\logord_p(\cI)$. 
\end{definition}

\begin{definition}
If $\logord_p(\cI)=a<\infty$, the \emph{logarithmic maximal contact ideal} of $\cI$ at $p$ is  $\cD^{\leq a-1}_{(Y|D)}(\cI)_p$. A \emph{logarithmic maximal contact element} for $\cI$ at $p$ is a section   $x\in\cD^{\leq a-1}_{(Y|D)}(\cI)_p$ with $\cD_{(Y|D)}(x)= (1)$, that is, some logarithmic derivative of $x$ is a unit. 
\end{definition}


The formation of logarithmic derivative ideals is functorial with respect to smooth morphisms, and even for logarithmically smooth morphisms. It follows that
for a logarithmically smooth morphism $f \colon \widetilde{Y} \to Y$ with $f(\widetilde{p}) = p$, we have $\logord_{\widetilde{p}}(\cI\cO_{\widetilde{Y}}) = \logord_{p}(\cI)$ and $x\in \cO_{Y,p}$ is a logarithmic maximal contact element if and only if $f^*x\in \cO_{Y_1,p_1}$ is a logarithmic maximal contact element. 



\begin{definition}
Say $\cI$ has maximal logarithmic order $a<\infty$. As before, we consider $\cD^{\leq a-i}_{(Y|D)}(\cI)$ as having weight $i$, for $i<a$, and take the logarithmic coefficient ideal $C_{(Y|D)}(\cI,a)$ generated by all the monomials in $\cD^{\leq a-1}_{(Y|D)}(\cI),\dotsc,\cD(\cI)_{(Y|D)},\cI$ of weighted degree $\geq a!$. Concretely,
$$C_{(Y|D)}(\cI,a) = \sum_{\sum i\cdot b_i \geq a!} \cD^{\leq a-1}_{(Y|D)}(\cI)^{b_1} \; \dotsm \; \cD^{\leq 1}(\cI)^{b_{a-1}}_{(Y|D)}\cdot \cI^{b_a}.$$

The formation of coefficient ideals is functorial with respect to logarithmically smooth morphisms.
\end{definition}

\subsection{Logarithmic invariant and blow-up center}

Recall that one may have $\logord_p(\cI) = \infty$, which happens precisely when $\cM_{(Y|D)}(\cI)_p \neq 1$. In that case, we define $\loginv_p(\cI) = \infty$ and define a logarithmic blow-up center $J^{\log}_{\cI,p} = \cQ := \cM_{(Y|D)}(\cI)$ around $p$. This means that in the induction we must allow logarithmic invariants to be infinite and logarithmic centers to involve monomial ideals and their fractional powers.

Assume $\logord_p(\cI)  = a_1<\infty$ and $x_1$ is a logarithmic maximal contact element. If $\cI_p = (x_1^{a_1})$ we define the logarithmic invariant of the ideal at $p$ to be $\loginv_p(\cI) = (a_1)$, and we define $J^{\log}_{\cI,p} = (x_1^{a_1})$.
Otherwise define  $\cI[2] = C_{(Y|D)}(\cI,a)|_{V(x_1)}$. 

We may again invoke induction, so that logarithmic invariants and centers are already defined on $V(x_1)$. That is, if $\loginv_p(\cI[2]) = (b_2,\dotsc,b_k)$, with center $(\bar x_2^{b_2} ,\dotsc , \bar x_k^{b_k})$, we set $a_i = b_i/(a_1-1)!$ for $i=2,\dotsc,k$, with $$\loginv_p(\cI) = (a_1,\dotsc, a_k)$$ and using arbitrary lifts $x_i$, $$J^{\log}_{\cI,p} = (x_1^{a_1},x_2^{a_2} ,\dotsc , x_k^{a_k}).$$ On the other hand, if $\loginv_p(\cI[2]) = (b_1,\dotsc, b_k,\infty)$, with center $(\bar x_2^{b_2} ,\dotsc , \bar x_k^{b_k},\bar \cQ^{1/e})$, we set $d = (a_1-1)!e$ and we similarly have $\loginv_p(\cI) = (a_1,\dotsc, a_k,\infty)$ with $a_i$ defined as before, and $$J^{\log}_{\cI,p} = (x_1^{a_1},x_2^{a_2},\dotsc,x_k^{a_k},\cQ^{1/d}),$$ where $\cQ$ is the \emph{unique monomial lift} of $\bar \cQ$. Note that $a_1 \leq a_2 \leq \dotsm \leq a_k$. Moreover, from the induction procedure, one can take $d = \prod_{i=1}^k{(a_i-1)!}$.

\subsection{Logarithmic principalization via log smooth weighted blow-ups}

We have the logarithmic analogue of Proposition~\ref{P:well-definedness-and-functoriality}:

\begin{proposition}[{See \cite[Lemma 6.1, Lemma 6.3, Theorem 6.5]{Quek}}]\label{P:well-definedness-and-functoriality-log}\ \begin{enumerate}
    \item The logarithmic invariant $\loginv_p(\cI)$ is independent of choices.
    \item The center $J_{\cI,p}$, as well as the monomial ideal $\cQ$, are independent of choices.
    \item The logarithmic invariant $\loginv_p(\cI)$ is upper semi-continuous on $Y$.
    \item If $\widetilde{Y} \to Y$ is log smooth, $\widetilde{\cI} = \cI\cO_{\widetilde{Y}}$, and $\widetilde{p} \to p$, then $$\loginv_p(\cI) = \loginv_{\widetilde{p}}(\widetilde{\cI})$$ and $$J^{\log}_{\widetilde{\cI},\widetilde{p}} = J^{\log}_{\cI,p}\cO_{\widetilde{Y},\widetilde{p}}.$$
\end{enumerate}
\end{proposition}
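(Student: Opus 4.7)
I would prove the four claims simultaneously by induction on $\dim Y$, equivalently on the length of the invariant sequence, following the template of the non-logarithmic Proposition~\ref{P:well-definedness-and-functoriality} but with every construction replaced by its log-smooth counterpart from Section~\ref{Sec:log-analogues}. The available ingredients---functoriality of $\cD^{\leq a}_{(Y|D)}$ under log smooth morphisms, and hence of $\cM_{(Y|D)}$, log order, log maximal contact ideal, and log coefficient ideal---make items (3) and (4) essentially automatic \emph{given} items (1) and (2); the substance therefore lies in the independence-of-choices statements.

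First I would dispose of the base cases. If $\logord_p(\cI) = \infty$, then $\cQ := \cM_{(Y|D)}(\cI)_p$ is intrinsic to $\cI$ and $(Y|D)$, so $\loginv_p(\cI) = \infty$ and $J^{\log}_{\cI,p} = \cQ$ are trivially well-defined, functorial, and upper semi-continuous. The fact that this $\cQ$ yields a toroidal associated center follows from Proposition~\ref{Prop:principalize-monomial-ideals} transplanted to the log smooth setting: monomial ideals are invariant under the torus action arising from the log structure, so the functorial recipe for centers outputs torus-eigenvectors. If $\logord_p(\cI) = a_1 < \infty$ and $\cI_p = (x_1^{a_1})$, again everything is intrinsic.

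In the remaining inductive step, the log order $a_1$ is intrinsic (it is the smallest integer with $\cD^{\leq a_1}_{(Y|D)}(\cI)_p = \cO_{Y,p}$), and $\cD^{\leq a_1-1}_{(Y|D)}(\cI)_p$ is the intrinsic log maximal contact \emph{ideal}, from which an element $x_1$ is chosen. Given two choices $x_1, x_1'$, I would invoke the logarithmic analogue of Giraud's hypersurface-of-maximal-contact lemma to produce a local log \'etale automorphism $\phi$ of $(Y, D)$ fixing $\cI$ and sending $x_1'$ to $x_1$ (a formal change of coordinates respecting the monomial parameters suffices). This $\phi$ induces an isomorphism of log smooth pairs $(V(x_1'), D|_{V(x_1')}) \toisom (V(x_1), D|_{V(x_1)})$ matching the two restricted log coefficient ideals, and the inductive hypothesis applied to the lower-dimensional pair delivers equality of log invariants and centers on $V(x_1)$. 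The rescaling by $(a_1-1)!$ is governed by the log analogue of Proposition~\ref{P:properties-of-inv-and-centers}(2), which holds by the same formal manipulation of coefficient ideals.

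The well-definedness of the monomial ideal $\cQ$ appearing at the tail of $J^{\log}_{\cI,p}$ relies on the \emph{unique monomial lift}: if $\bar \cQ$ is monomial on $(V(x_1), D|_{V(x_1)})$ with prescribed exponents in the monomial parameters, then there is a unique monomial ideal on $(Y, D)$ with the same exponents. The automorphism $\phi$ above carries monomial ideals and their exponents to themselves, so the lift does not depend on the choice of $x_1$. Functoriality under a log smooth $\widetilde{Y} \to Y$ then reduces, step by step through the induction, to functoriality of $\cD^{\leq a}_{(Y|D)}$, of coefficient ideals, and of restriction to $V(x_1)$ for a log maximal contact pulled back to $\widetilde{Y}$; upper semi-continuity descends from that of log order combined with the inductive step. \emph{The main obstacle} will be the logarithmic Giraud lemma and the naturality of the monomial lift---precisely the two places where the presence of $D$ must be honored rather than bypassed. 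Both are handled in Quek's treatment, and the cited \cite[Lemma 6.1, Lemma 6.3, Theorem 6.5]{Quek} are exactly the references where these technicalities are executed.
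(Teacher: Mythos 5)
The paper does not contain a proof of this proposition; as the bracketed citation signals, it is imported wholesale from \cite[Lemma 6.1, Lemma 6.3, Theorem 6.5]{Quek}. Your overall architecture --- simultaneous induction on the length of the invariant sequence, base case at $\logord_p = \infty$ handled by the intrinsic monomial saturation $\cM_{(Y|D)}(\cI)$, inductive step via restriction of the log coefficient ideal to a maximal-contact hypersurface, with (3) and (4) inherited from functoriality of the building blocks --- is exactly the template Quek runs in parallel to \cite{ATW-weighted}, so the skeleton is right.

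Two issues, however. First, a misattribution that signals a conceptual confusion: the lemma you invoke to manufacture the automorphism $\phi$ matching two choices $x_1, x_1'$ of maximal contact is not ``Giraud's hypersurface-of-maximal-contact lemma.'' Giraud's lemma is Lemma~\ref{Lem:derivatives-vs-weak-transform} in this paper; it controls the interaction of derivative ideals with the weak transform and is deployed for the \emph{drop} of the invariant in Proposition~\ref{Prop:maximality-in-B}, not for its well-definedness. The device that identifies two maximal contacts is W{\l}odarczyk's homogenization argument \cite{Wlodarczyk} (``tuning'' in Koll\'ar's terminology \cite{Kollar}); it is the logarithmic version of this that Quek supplies. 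Second, and more substantively: there is in general no local automorphism fixing $\cI$ and sending $x_1'$ to $x_1$. That failure is precisely why one first passes to a homogenized version of the coefficient ideal, whose built-in symmetry makes the \'etale-local correspondence exist --- and even then one obtains a common \'etale (or formal) neighborhood, not an automorphism on the nose. Your conclusion (``matching the two restricted log coefficient ideals'') is the right thing to aim for, but the stated intermediate claim is an overreach, and it is exactly the step where a literal reading of your argument breaks down. Making the homogenization explicit, together with the log-specific constraint that the correspondence preserve the monomial parameters (which you correctly flag), is what closes this gap; that is the content of the cited results in \cite{Quek}.
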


Again, this implies \emph{gluing} of centers. That is, it allows one to glue the local logarithmic blow-up centers $J^{\log}_{\cI,p}$ around points $p \in Y$ where $\loginv_p(\cI) = \maxloginv(\cI)$, so as to obtain a center $J^{\log}_\cI$ on $Y$. 

We also have the logarithmic analogue of Proposition~\ref{P:properties-of-inv-and-centers}: 

\begin{proposition}[{See \cite[Corollary 6.8]{Quek}}]\label{P:properties-of-inv-and-centers-log}\
Assume $\ord_p(\cI) = a_1 < \infty$, with logarithmic maximal contact $x_1$ for $\cI$ at $p$. Then:
	\begin{enumerate} 
		\item $x_1$ is logarithmic maximal contact for $C_{(Y|D)}(\cI,a_1)$ at $p$.
		\item $\loginv_p(C_{(Y|D)}(\cI,a_1)) = (a_1-1)! \cdot \loginv_p(\cI)$, where $(a_1-1)! \cdot \infty = \infty$. In addition, $\inv_p(\cI^k) = k \cdot \inv_p(\cI)$ for every integer $k \geq 0$.
            \item $J^{\log}_{C_{(Y|D)}(\cI,a_1),p} = (J^{\log}_{\cI,p})^{(a_1-1)!}$.
	\end{enumerate}
\end{proposition}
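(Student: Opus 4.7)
The plan is to mirror the proof of Proposition~\ref{P:properties-of-inv-and-centers} step by step, replacing every derivative by its logarithmic analogue, every coefficient ideal by its logarithmic coefficient ideal, and tracking the new $\infty$-branch of $\loginv_p$ in which a monomial lift $\cQ$ enters the blow-up center. The inductive structure is driven by Proposition~\ref{P:well-definedness-and-functoriality-log}: functoriality along log-smooth morphisms and well-definedness of the invariant let the recursion on $V(x_1)$ proceed coherently across charts.

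For part (1), I would first show $\logord_p(C_{(Y|D)}(\cI,a_1)) = a_1!$. The upper bound follows from noting that $x_1 \in \cD^{\leq a_1-1}_{(Y|D)}(\cI)_p$ carries weight $1$ in the defining sum, so $x_1^{a_1!}$ lies in $C_{(Y|D)}(\cI,a_1)_p$; iterating a log derivation $\nabla$ with $\nabla(x_1) \in \cO_{Y,p}^\times$ a total of $a_1!$ times produces a unit. The lower bound follows from a Leibniz-rule computation: any log operator of order $< a_1!$ applied to a weighted monomial in $\cD^{\leq a_1-1}_{(Y|D)}(\cI),\dotsc,\cI$ of total weight $\geq a_1!$ lands inside $\cD^{\leq a_1-1}_{(Y|D)}(\cI)$, which is a proper ideal at $p$. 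The same iterated derivations exhibit $x_1 \in \cD^{\leq a_1!-1}_{(Y|D)}(C_{(Y|D)}(\cI,a_1))_p$, confirming the max contact property.

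Parts (2) and (3) proceed by induction on the length of $\loginv_p(\cI)$. The base case $\loginv_p(\cI) = (a_1)$ reduces to the direct computation $\loginv_p(C_{(Y|D)}((x_1^{a_1}),a_1)) = (a_1!)$. For the inductive step, part (1) allows the recursion to restrict to $V(x_1)$, and one then compares $(C_{(Y|D)}(\cI,a_1)|_{V(x_1)})^{(a_1-1)!}$ with the second-stage coefficient ideal $C_{(Y|D)}(C_{(Y|D)}(\cI,a_1),a_1!)|_{V(x_1)}$; together with the multiplicativity relation $\loginv_p(\cJ^k) = k \cdot \loginv_p(\cJ)$ (itself a short parallel induction using $\cD^{\leq j}_{(Y|D)}(\cJ^k) \subset \cJ^{k-1} \cD^{\leq j}_{(Y|D)}(\cJ)$) and the inductive hypothesis applied on $V(x_1)$, this forces the tail of $\loginv_p(C_{(Y|D)}(\cI,a_1))$ to rescale by $(a_1-1)!$. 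Part (3) is then immediate from the definitions, since raising each exponent of the $\QQ$-ideal $J^{\log}_{\cI,p}$ by $(a_1-1)!$ reproduces $J^{\log}_{C_{(Y|D)}(\cI,a_1),p}$ term by term.

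The main obstacle I expect is the $\infty$-branch bookkeeping in part (2). When $\loginv_p(\cI[2])$ terminates in $\infty$ with monomial ideal $\bar\cQ$ and exponent $1/e$, one must verify that the coefficient ideal recursion produces the same unique monomial lift $\cQ$, and that its exponent rescales to $1/((a_1-1)!\,e)$. This amounts to checking that the monomial saturation $\cM_{(Y|D)}$ commutes with both $C_{(Y|D)}(-,a_1)$ and restriction to $V(x_1)$, which in turn follows from log-smooth functoriality (Proposition~\ref{P:well-definedness-and-functoriality-log}(4)) after reducing to an explicit local toric model in the spirit of Proposition~\ref{Prop:principalize-monomial-ideals}.
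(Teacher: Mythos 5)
The paper does not actually prove this proposition: it is stated with the citation ``See [Quek, Corollary~6.8]'' and no proof is supplied, exactly as the non-logarithmic Proposition~\ref{P:properties-of-inv-and-centers} is cited from [ATW-weighted, Corollary~5.3.3]. So there is no in-paper argument to compare against; the role of this proposition in the present paper is as imported input.

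Viewed as a sketch of how the cited result could be proved, your proposal has the right overall shape (mirror the smooth case, induct on the length of $\loginv_p$, isolate the $\infty$-branch carrying the monomial ideal $\cQ$), but two steps would not survive scrutiny as written. First, the inclusion $\cD^{\leq j}_{(Y|D)}(\cJ^k) \subset \cJ^{k-1}\,\cD^{\leq j}_{(Y|D)}(\cJ)$ holds only for $j \leq 1$; for larger $j$ the Leibniz rule gives a sum of mixed products $\cD^{\leq j_1}_{(Y|D)}(\cJ)\cdots\cD^{\leq j_k}_{(Y|D)}(\cJ)$ with $j_1+\cdots+j_k \leq j$, and the multiplicativity $\loginv_p(\cJ^k) = k\cdot\loginv_p(\cJ)$ must be extracted from that weaker statement. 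Second, and more seriously, your treatment of the $\infty$-branch appeals to Proposition~\ref{P:well-definedness-and-functoriality-log}(4) to justify that $\cM_{(Y|D)}$ commutes with restriction to $V(x_1)$; but $V(x_1)\hookrightarrow Y$ is a closed immersion, not a log-smooth morphism, so that functoriality does not apply. What is actually needed is the compatibility of monomial saturation under passage from $(Y|D)$ to the transverse log-smooth hypersurface $(V(x_1)\,|\,D\cap V(x_1))$, a separate (if standard) property of $\cM$ that must be established directly, for instance by a local coordinate computation in which $x_1$ is a parameter transverse to the components of $D$.
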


Under the logarithmic weighted blow-up $\pi \colon Y' \to Y$ of the reduced center associated to $J^{\log}_\cI$, endowed with logarithmic structure $(Y'|D')$ where $D' \subset Y'$ is given by the union of $\pi^*D$ and the exceptional divisor of $\pi$, it is shown in \cite{Quek} that the weak transform $\cI'$ of $\cI$ satisfies $$\maxloginv(\cI') < \maxloginv(\cI).$$ There is a slight difference here: the new ambient variety $Y'$ is not necessarily smooth, but is at least log smooth when endowed with the logarithmic structure $(Y'|D')$. Therefore, one has to extend the logarithmic invariant $\loginv$ to this setting, namely that of ideals on log smooth varieties. As was already hinted in Proposition~\ref{P:well-definedness-and-functoriality-log}(4), this blow-up $\pi$ satisfies stronger functoriality with respect to \emph{log smooth morphisms} $\widetilde{Y} \to Y$. 

Repeating this procedure, one eventually transforms $Y$ into a log smooth variety, with the total transform of $\cI$ principalized. 


%
%
%
%
%
%
%
%

\section{Logarithmic principalization via smooth weighted blow-ups}\label{Sec:general-ideals}

\subsection{A new extended logarithmic invariant and blow-up center}

Consider again a smooth variety $Y$ with a simple normal crossings divisor $D$. Let $\cI$ be an ideal.  Fix a point $p$ where $\cI$ vanishes.  In the previous section we defined a logarithmic invariant $\loginv_p(\cI)$ and logarithmic center $J^{\log}_{\cI,p}$ associated to $\cI$ at $p$. In this section, we define a slightly different extended logarithmic invariant $\loginv^*_p(\cI)$ and extended logarithmic center $J^{\log,*}_{\cI,p}$ associated to $\cI$ at $p$.
 
The difference between $\loginv$ and $\loginv^*$ arises only in the case where $\loginv_p(\cI) = (a_1,a_2,\dotsc,a_k,\infty)$ with $J^{\log}_{\cI,p} = (x_1^{a_1},x_2^{a_2},\dotsc,x_k^{a_k},\cQ^{1/d})$. In this case, let us consider the invariant $\inv_p(\cQ)=(c_1,\dotsc,c_m)$ and the associated center $J_{\cQ,p} = (y_1^{c_1},\dotsc,y_m^{c_m})$ introduced in Section~\ref{Sec:invariant-and-center}, and let us set the numbers $b_i = c_i/d$. Recall from Proposition~\ref{Prop:principalize-monomial-ideals} that we may and will choose $y_1,\dotsc,y_m$ to be monomial parameters on $(Y,D)$. With these conventions in mind, we define a new extended logarithmic invariant: 
 $$\loginv^*_p(\cI) = \begin{cases} \loginv_p(\cI)=(a_1,a_2,\dotsc,a_k) & \text {if $\infty$ is absent in $\loginv_p(I)$} \\ \\(a_1,\dotsc,a_k,\omega+b_1,\dotsc,\omega+b_m) \quad & 
 \text{\parbox{8.cm}{otherwise, with $\loginv_p(\cI) = (a_1,\dotsc,a_k,\infty)$}} \end{cases}$$ as well as a new extended logarithmic center around $p$: 
 $$J^{\log,*}_{\cI,p} = \begin{cases} J^{\log}_{\cI,p} = (x_1^{a_1},x_2^{a_2},\dotsc,x_k^{a_k}) & \text{if $\infty$ is absent in $\loginv_p(I)$} \\ \\
   (x_1^{a_1},\dotsc,x_k^{a_k},y_1^{b_1},\dotsc,y_m^{b_m})
 \qquad & \text{\parbox{4cm}{otherwise, with $J^{\log}_{\cI,p} = (x_1^{a_1},\dotsc,x_k^{a_k},\cQ^{1/d})$.}} \end{cases}$$
 
 The set of such extended logarithmic invariants is ordered lexicographically similar to before, with the additional caveat that terms $\omega+b_i$ are declared infinitely larger than the rational numbers $a_j$; the notation $\omega$ is here to suggest the first infinite ordinal. 
 Note that we have $a_1 \leq a_2 \leq \dotsm \leq a_k$ and $b_1 \leq b_2 \leq \dotsm \leq b_m$, but each $b_i$ could be smaller or larger than any $a_j$. We have the following analogue of Proposition~\ref{P:properties-of-inv-and-centers-log}:

 \begin{proposition}\label{P:well-definedness-and-functoriality-log*}\
	\begin{enumerate}
	\item The invariant $\loginv_p^*(\cI)$ is independent of choices.
    \item The center $J_{\cI,p}^{\log,*}$ is independent of choices as valuative $\QQ$-ideal. 
    \item The invariant $\loginv_p^*(\cI)$ is upper semi-continuous on $Y$ and takes values in a well-ordered set. 
    \item If $\widetilde{Y} \to Y$ is smooth and log smooth, $\widetilde{\cI} = \cI \cO_{\widetilde{Y}}$, and $\widetilde{p} \mapsto p$, then $$\loginv_p^*(\cI) = \loginv_{\widetilde{p}}^*(\widetilde{\cI})$$ and $$J^{\log,*}_{\widetilde{\cI},\widetilde{p}} = J^{\log,*}_{\cI,p}\cO_{\widetilde{Y},\widetilde{p}}.$$
	\end{enumerate}
	\end{proposition}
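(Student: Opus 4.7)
The plan is to reduce (1)--(4) to the corresponding statements already established in Proposition~\ref{P:well-definedness-and-functoriality-log} for $\loginv_p$ and $J^{\log}_{\cI,p}$ (together with the associated monomial ideal $\cQ$), combined with Proposition~\ref{P:well-definedness-and-functoriality} applied to the ordinary invariant $\inv_p(\cQ)$ on the smooth variety $Y$, and supplemented by Proposition~\ref{Prop:principalize-monomial-ideals} to guarantee that the parameters $y_i$ appearing in $J_{\cQ,p}$ can be chosen to be monomial with respect to $D$.

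When $\infty$ is absent from $\loginv_p(\cI)$ there is nothing to prove beyond what is already contained in Proposition~\ref{P:well-definedness-and-functoriality-log}, since in that case $\loginv^*_p(\cI)=\loginv_p(\cI)$ and $J^{\log,*}_{\cI,p}=J^{\log}_{\cI,p}$. The substantive case is $\loginv_p(\cI)=(a_1,\dotsc,a_k,\infty)$. Here the prefix $(a_1,\dotsc,a_k)$, the partial center $(x_1^{a_1},\dotsc,x_k^{a_k})$, the positive integer $d$, and the monomial ideal $\cQ$ are all independent of choices and functorial under log smooth morphisms by Proposition~\ref{P:well-definedness-and-functoriality-log}. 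For the new tail, I apply Proposition~\ref{P:well-definedness-and-functoriality} to the ideal $\cQ$ on the smooth variety $Y$, obtaining $\inv_p(\cQ)=(c_1,\dotsc,c_m)$ and a center $J_{\cQ,p}=(y_1^{c_1},\dotsc,y_m^{c_m})$ well-defined as a valuative $\QQ$-ideal; Proposition~\ref{Prop:principalize-monomial-ideals} then permits me to choose the $y_i$ to be monomial parameters on $(Y,D)$. Rescaling by $1/d$ yields the tail of $\loginv^*_p(\cI)$ and of $J^{\log,*}_{\cI,p}$, giving parts (1) and (2).

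For (3), upper semi-continuity follows by stratifying $Y$ according to the value of the prefix (upper semi-continuous by Proposition~\ref{P:well-definedness-and-functoriality-log}(3)) and applying Proposition~\ref{P:well-definedness-and-functoriality}(3) to $\cQ$ on each stratum where $\loginv_p(\cI)$ terminates in $\infty$. The value set embeds into a lexicographic concatenation of two well-ordered sets of finite sequences of rationals (with truncations counted as larger), each well-ordered by the argument of \cite[Section 5.1]{ATW-weighted}, so the extended value set is again well-ordered. For (4), a morphism that is simultaneously smooth and log smooth inherits the log smooth functoriality of Proposition~\ref{P:well-definedness-and-functoriality-log}(4) (for the prefix and for $\cQ$) and the smooth functoriality of Proposition~\ref{P:well-definedness-and-functoriality}(4) (for $\cQ$, whose pullback $\cQ\cO_{\widetilde Y}$ remains monomial on $(\widetilde Y,\widetilde D)$), so both halves of the extended invariant and center transfer consistently.

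The principal obstacle I anticipate is purely organizational: confirming that the monomial portion of $J^{\log,*}_{\cI,p}$ is truly independent, as a valuative $\QQ$-ideal, of the specific monomial parameters $y_i$ chosen. Proposition~\ref{P:well-definedness-and-functoriality}(2) already guarantees this independence over all choices of parameters for $\cQ$; the restriction to monomial choices enforced by Proposition~\ref{Prop:principalize-monomial-ideals} is simply a specialization within that scope, so no new content is required. Everything else is a careful bookkeeping exercise that keeps the two stages of the invariant and of the center cleanly separated and appeals to the appropriate underlying proposition for each stage.
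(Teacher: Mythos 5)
Your proposal follows the same route as the paper's proof: reduce to Propositions~\ref{P:well-definedness-and-functoriality-log} (for the finite prefix, the monomial ideal $\cQ$, and the partial center) and~\ref{P:well-definedness-and-functoriality} (for the ordinary invariant and center of $\cQ$ on the smooth $Y$), plus Proposition~\ref{Prop:principalize-monomial-ideals} to justify monomial choices of $y_i$, with the non-$\infty$ case being vacuous. The only thing the paper states a bit more explicitly is the chain of equalities of valuative $\QQ$-ideals $J^{\log,*}_{\cI,p} = (x_1^{a_1},\dotsc,x_k^{a_k},y_1^{b_1},\dotsc,y_m^{b_m}) = (x_1^{a_1},\dotsc,x_k^{a_k},\cQ^{1/d},y_1^{b_1},\dotsc,y_m^{b_m}) = (J_{\cI,p}^{\log},J_{\cQ,p}^{1/d})$, which packages the well-definedness of part~(2) as a join of two valuative $\QQ$-ideals each known to be well-defined; your argument has the same content, just stated less compactly.
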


\begin{proof}
It suffices to focus on the case where $\loginv_p(\cI) = (a_1,a_2,\dotsc,a_k,\infty)$ with $J^{\log}_{\cI,p} = (x_1^{a_1},x_2^{a_2},\dotsc,x_k^{a_k},\cQ^{1/d})$. Then $\loginv^*_p(\cI)$ is the concatenation of the finite entries $(a_1,\dotsc,a_k)$ in $\loginv_p(\cI)$, followed by $\omega+(1/d) \cdot \inv_p(\cQ)$. Therefore, (1) follows from parts (1) and (2) of Proposition~\ref{P:well-definedness-and-functoriality-log} as well as part (1) of Proposition~\ref{P:well-definedness-and-functoriality}. (3) follows from parts (3) of Propositions~\ref{P:well-definedness-and-functoriality} and~\ref{P:well-definedness-and-functoriality-log}. The assertion in (4) pertaining to $\loginv^*$ follows from parts (4) of Propositions~\ref{P:well-definedness-and-functoriality} and~\ref{P:well-definedness-and-functoriality-log}.

Next, as valuative $\QQ$-ideals, \begin{align*}
    J_{\cI,p}^{\log,*} = (x_1^{a_1},\dotsc,x_k^{a_k},y_1^{b_1},\dotsc,y_m^{b_m}) &= (x_1^{a_1},\dotsc,x_k^{a_k},\cQ^{1/d},y_1^{b_1},\dotsc,y_m^{b_m}) \\
    &= (J_{\cI,p}^{\log},J_{\cQ,p}^{1/d})
\end{align*}
from which (2) follows from part (2) of Proposition~\ref{P:well-definedness-and-functoriality-log}, and the assertion in (4) pertaining to $J^{\log,*}$ follows from parts (4) of Propositions~\ref{P:well-definedness-and-functoriality} and~\ref{P:well-definedness-and-functoriality-log}.
\end{proof}

As before, this allows for \emph{gluing} of centers. Let $J_\cI^{\log,*}$ denote the center on $Y$ obtained by gluing the local centers $J_{\cI,p}^{\log,\ast}$ for $p \in Y$ with $\loginv_p^\ast(\cI) = \maxloginv^\ast(\cI)$.


\subsection{The extended logarithmic invariant drops}

Here is the key result of this paper:

 \begin{theorem}[Functorial logarithmic principalization of ideals] \label{Th:log principalization} Under the weighted blow-up $\pi \colon Y' \to Y$ of the reduced center $\oJ$ associated to $J = J^{\log,*}_\cI$, endowed with the simple normal crossings divisor $D' \subset Y'$ given by the union of $\pi^* D$ and the exceptional divisor $E$ of $\pi$, the weak transform $\cI'$ of $\cI$ satisfies $$\maxloginv^*(\cI') < \maxloginv^*(\cI).$$ After finitely many iterations, the total transform of the ideal $\cI$ is a locally principal monomial ideal on a smooth variety $Y^{(n)}$ carrying a normal crossings divisor $D^{(n)}$. 
 \end{theorem}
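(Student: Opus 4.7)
The plan is to prove the theorem by a case analysis on the structure of $\maxloginv^*(\cI)$ at a maximum point $p$, adapting the argument of Theorem~\ref{Th:invariant drops} via the degeneration to weighted normal cone.

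\textbf{Case 1.} If $\maxloginv_p(\cI) = (a_1, \ldots, a_k)$ is finite, then $\loginv^*_p(\cI) = \loginv_p(\cI)$ and $J^{\log,*}_{\cI,p} = J^{\log}_{\cI,p}$ coincides with Quek's logarithmic center. Since a logarithmic maximal contact element has a unit ordinary derivative in some non-monomial direction (otherwise no log derivative could be a unit), the $x_i$'s extend to a regular system of parameters compatible with $D$. The blow-up $Y' \to Y$ is therefore a smooth weighted blow-up in the sense of Section~\ref{Sec:smooth-weighted-blow-ups}, so $Y'$ is smooth and $D'$ is SNC. Quek's logarithmic principalization (Section~\ref{Sec:toroidal-resolution}) then gives $\maxloginv(\cI') < \maxloginv(\cI)$, which translates directly to $\maxloginv^*(\cI') < \maxloginv^*(\cI)$ since $\loginv^*$ extends $\loginv$ lexicographically (an $\omega$-tail is strictly smaller than any finite truncation of the corresponding $\loginv$ sequence).

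\textbf{Case 2.} If $\maxloginv_p(\cI) = (a_1, \ldots, a_k, \infty)$, so that $\loginv^*_p(\cI) = (a_1, \ldots, a_k, \omega + b_1, \ldots, \omega + b_m)$ and $J^{\log,*}_{\cI,p} = (x_1^{a_1}, \ldots, x_k^{a_k}, y_1^{b_1}, \ldots, y_m^{b_m})$, Proposition~\ref{Prop:principalize-monomial-ideals} lets one choose the $y_j$'s as monomial parameters, which together with the $x_i$'s extend to a regular system of parameters compatible with $D$. Hence $Y' \to Y$ is again a smooth weighted blow-up and $D'$ is SNC. For the invariant drop, the plan is to establish the analogue of Proposition~\ref{Prop:maximality-in-B}: on the degeneration to weighted normal cone $B_J$, the maximum of $\loginv^*(\cI')$ equals $\loginv^*_p(\cI)$ and is attained precisely on $V(\cA_{J+})$. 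Descending to $Y'$ via the smooth surjection $B_{J+} \to Y'$ and applying the functoriality of $\loginv^*$ from Proposition~\ref{P:well-definedness-and-functoriality-log*}(4) then yields the strict inequality $\maxloginv^*(\cI') < \maxloginv^*(\cI)$.

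The proof of this analogue proceeds in two inductive stages matching the structure of $\loginv^*$. For the $(a_1, \ldots, a_k)$-stage I would prove a logarithmic Giraud lemma $(\cD^{\leq i}_{(Y|D)}(\cI))' \subset \cD^{\leq i}_{(B_J|D_{B_J})}(\cI')$ following Lemma~\ref{Lem:derivatives-vs-weak-transform} but using log derivatives, showing that log order, log maximal contact and log coefficient ideals propagate from $Y$ to $B_J$ and are unchanged along the degeneration direction. For the $(\omega+b_j)$-stage, once the first stage is cleared one is reduced to principalizing the monomial ideal $\cQ$ on a log smooth base, where Proposition~\ref{Prop:maximality-in-B} applied to $\cQ$, combined with the torus-equivariance of monomial centers from Proposition~\ref{Prop:principalize-monomial-ideals}, delivers the remaining drop. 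The main obstacle is verifying the compatibility at the transition between the two stages: one must show that the monomial saturation $\cM_{(Y|D)}(\cI[k{+}1])$ transforms under the degeneration to the monomial saturation of the corresponding restricted weak transform on $B_J$, which reduces to a chain-rule computation in the local coordinates $(s, x_i', y_j')$ analogous to, but more elaborate than, the one in the proof of Lemma~\ref{Lem:derivatives-vs-weak-transform}. Termination then follows because $\loginv^*$ takes values in a well-ordered set by Proposition~\ref{P:well-definedness-and-functoriality-log*}(3) and strictly decreases under each iteration; at a terminal stage the total transform is necessarily locally principal and monomial on the smooth ambient stack, completing the principalization.
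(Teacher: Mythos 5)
Your proposal follows essentially the same path as the paper's proof: reduce to the analogue of Proposition~\ref{Prop:maximality-in-B} on the degeneration $B_J$, argue by induction on the number $k$ of finite entries, use a logarithmic Giraud lemma for the finite stage, and reduce the $\omega$-stage to Proposition~\ref{Prop:maximality-in-B} applied to the monomial ideal $\cQ$ (with monomial $y_j$'s via Proposition~\ref{Prop:principalize-monomial-ideals}); the explicit two-case split you use is what the paper's opening ``it suffices to consider the case $\maxloginv = (a_1,\dotsc,a_k,\infty)$'' implicitly disposes of, and your observation that a log maximal contact is necessarily transverse to $D$ (hence the center is smooth and $D'$ is SNC) is correct.

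The one genuine divergence is at what you correctly identify as the main obstacle: showing that the monomial saturation $\cM_{(Y|D)}$ is compatible with passing to the degeneration $B_J$. You propose a chain-rule computation in the coordinates $(s, x_i', y_j')$, ``analogous to but more elaborate than'' Lemma~\ref{Lem:derivatives-vs-weak-transform}, and leave it at that. The paper instead makes a structural observation: after unwinding the induction down to $k=0$, the center is \emph{purely monomial}, so the degeneration $B_J \to Y$ is a \emph{log smooth} morphism, and one can cite \cite[Theorem 3.4.2(3)]{ATW-principalization} for the identity $\cM_{(B_J|D_J)}(\cI\cO_{B_J}) = \cM_{(Y|D)}(\cI)\cO_{B_J}$; combined with $\cM(\cI_E^\ell\,\cI') = \cI_E^\ell\,\cM(\cI')$ this gives $\cQ' = \cM_{(B_J|D_J)}(\cI')$ directly. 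Your chain-rule route can be made to work (one checks that $y_j\partial_{y_j}$ pulls back to $y_j'\partial_{y_j'}$ and that the extra $s\partial_s$ is absorbed via the Euler relation, so one gets the needed \emph{equality} of log derivative ideals and not merely the one-sided inclusion of Lemma~\ref{Lem:derivatives-vs-weak-transform}), but you would have to actually verify the equality rather than assert it, since Giraud's lemma as stated only gives an inclusion. The log-smoothness observation is the cleaner key idea, and spelling it out would replace your flagged obstacle with a one-line argument.
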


As with Proposition~\ref{P:well-definedness-and-functoriality-log*}, it suffices to consider the case where $\maxloginv(\cI) = (a_1,a_2,\dotsc,a_k,\infty)$ with $J^{\log}_{\cI} = (x_1^{a_1},x_2^{a_2},\dotsc,x_k^{a_k},\cQ^{1/d})$. We adopt the same notations and equations as in Section~\ref{Sec:smooth-weighted-blow-ups}, e.g. $\cA_J$, $\cA_{J+}$, $B_J$ and $C_J$. In addition, let $D_J \subset B_J$ denote the simple normal crossings divisor given by the union of $C_J$ together with the pullback of $D \subset Y$ to $B_J$. We endow $B_J$ with the logarithmic structure $(B_J|D_J)$. In the same vein as the proof of Theorem~\ref{Th:invariant drops}, we need to prove the following:

\begin{proposition}\label{Prop:maximalitry-in-B*} We have $$\maxloginv^*_{(B_J|D_J)}(\cI') = \maxloginv^*_{(Y|D)}(\cI) = (a_1,\dotsc, a_k,\omega+b_1,\dotsc,\omega+b_m),$$ with maximal locus precisely $V(\cA_{J+}) \subset B_J$.
\end{proposition}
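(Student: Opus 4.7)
The plan is to mirror the proof of Proposition~\ref{Prop:maximality-in-B}, splitting the extended invariant into its finite logarithmic part $(a_1,\ldots,a_k)$ (analyzed via Quek's logarithmic methods) and its infinite monomial tail $(\omega+b_1,\ldots,\omega+b_m)$ (analyzed via the non-logarithmic results of Section~\ref{Sec:principalization} applied to $\cQ$). When $\infty$ is absent from $\maxloginv(\cI)$, we have $J^{\log,*}=J^{\log}$ and $\loginv^*=\loginv$, so the statement reduces to Quek's logarithmic principalization theorem \cite{Quek}. Thus I assume $\maxloginv(\cI)=(a_1,\ldots,a_k,\infty)$ and $J=(x_1^{a_1},\ldots,x_k^{a_k},y_1^{b_1},\ldots,y_m^{b_m})$ locally, with $y_1,\ldots,y_m$ monomial parameters on $(Y,D)$.

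First I would establish a logarithmic Giraud inclusion
\begin{equation*}
    (\cD^{\leq 1}_{(Y|D)}(\cI))' \;\subset\; \cD^{\leq 1}_{(B_J|D_J)/\AA^1}(\cI'),
\end{equation*}
by a chain-rule computation parallel to Lemma~\ref{Lem:derivatives-vs-weak-transform}. Writing $x_i=s^{w_i}x_i'$ for $i\leq k$ and $y_j=s^{w_{k+j}}y_j'$ for $j\leq m$, the ordinary $\partial/\partial x_i$-generators of $\cD^{\leq 1}_{(Y|D)}(\cI)$ contribute $s^{w_1-w_i}\partial f'/\partial x_i'$ after weak transform, and $w_1\geq w_i$ since $a_1=\min\{a_i\}_{i\leq k}$. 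The torus-equivariant log-derivatives $y_j\partial/\partial y_j$ contribute $s^{w_1}(y_j'\partial f'/\partial y_j')$, which lies in $\cD^{\leq 1}_{(B_J|D_J)/\AA^1}(\cI')$ unconditionally (no weight comparison required, since log-derivatives preserve the monomial grading). Iterating yields $(\cD^{\leq i}_{(Y|D)}(\cI))'\subset\cD^{\leq i}_{(B_J|D_J)}(\cI')$ for all $i$. Running the argument of Proposition~\ref{Prop:maximality-in-B} with log-derivatives and log-coefficient ideals in place of their ordinary counterparts, and invoking Propositions~\ref{P:properties-of-inv-and-centers-log} and~\ref{P:well-definedness-and-functoriality-log}(4) as Quek does, the first $k$ entries of $\maxloginv_{(B_J|D_J)}(\cI')$ turn out to be $(a_1,\ldots,a_k)$, with $x_1',\ldots,x_k'$ serving as the iterated log-maximal contacts and with $\infty$ appearing in slot $k+1$ (the $\infty$ persists since monomial saturation is compatible with the smooth projection $B_J\setminus C_J\to Y$ and upper semicontinuity forces it across $C_J$).

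The new content is identifying the monomial tail. Let $\cQ_{B_J}$ denote the monomial saturation appearing at step $k+1$ in $(B_J|D_J)$. Since $\cQ$ is monomial in the $y_j$ alone and $x_1',\ldots,x_k'$ cut out a subvariety transverse to the $y$-directions, the same substitution $y_j=s^{w_{k+j}}y_j'$ identifies the restriction of $\cQ_{B_J}$ to $V(x_1',\ldots,x_k')$ with the weak transform of $\cQ$ under the weighted blow-up along the purely monomial center $(y_1^{b_1},\ldots,y_m^{b_m})=J_\cQ^{1/d}$. Applying Proposition~\ref{Prop:maximality-in-B} to $\cQ$ on a toric chart (using functoriality along an \'etale cover $Y\to\AA^n$ as in Proposition~\ref{Prop:principalize-monomial-ideals}) gives $\maxinv(\cQ_{B_J})=\maxinv(\cQ)=(c_1,\ldots,c_m)$ with maximal locus $V(\cA_{J_\cQ+})$. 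This locus corresponds to $V(\cA_{J+})\cap V(x_1',\ldots,x_k')$ on $B_J$, and rescaling the invariant by $1/d$ converts $(c_1,\ldots,c_m)$ into $(b_1,\ldots,b_m)$, yielding the desired tail $(\omega+b_1,\ldots,\omega+b_m)$. Combined with the finite part, $\maxloginv^*_{(B_J|D_J)}(\cI')=(a_1,\ldots,a_k,\omega+b_1,\ldots,\omega+b_m)$ with maximal locus $V(\cA_{J+})$.

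The main obstacle is this last step: one must reconcile the mixed center $J^{\log,*}$ (with both non-monomial $x$- and monomial $y$-generators) with the purely monomial center $J_\cQ$ that governs the invariant of $\cQ$. The crucial observation is that $\cQ$ is blind to the $x$-part of $J^{\log,*}$, so its weak transform depends only on the $y$-weights; replacing $J_\cQ$ by $J_\cQ^{1/d}$ is harmless as valuative $\QQ$-ideals and does not alter the non-log invariant by Proposition~\ref{P:properties-of-inv-and-centers}(2), so Proposition~\ref{Prop:maximality-in-B} applies directly to complete the analysis.
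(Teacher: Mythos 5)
Your overall strategy — a logarithmic Giraud lemma, iterated reduction by logarithmic maximal contacts $x_1',\dotsc,x_k'$, and then identification of the monomial tail via Proposition~\ref{Prop:maximality-in-B} applied to $\cQ$ — is essentially the route the paper takes, which it organizes as an induction on $k$. Your log-Giraud computation and the treatment of the finite part $(a_1,\dotsc,a_k)$ via Propositions~\ref{P:properties-of-inv-and-centers-log} and~\ref{P:well-definedness-and-functoriality-log}(4) are sound and match the paper's inductive step.

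However, the step you yourself flag as "the main obstacle" contains a genuine gap. You assert that the restriction of the monomial saturation $\cQ_{B_J} := \cM_{(B_J|D_J)}(\cI')$ to $V(x_1',\dotsc,x_k')$ coincides with the weak transform of $\cQ$, justifying this by "$\cQ$ is blind to the $x$-part." This is the conclusion you want, not an argument for it. Monomial saturation is computed from $\cI'$ on the whole of $B_J$, and there is no a priori reason it is determined by the $y$-substitution alone. Your fallback via upper semicontinuity across $C_J$ only yields the inequality $\maxloginv^*_{(B_J|D_J)}(\cI') \geq (a_1,\dotsc,a_k,\omega+b_1,\dotsc,\omega+b_m)$, not equality. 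The missing ingredient is the fact that monomial saturation commutes with pullback along \emph{log smooth} morphisms (\cite[Theorem 3.4.2(3)]{ATW-principalization}). In the paper this enters at the base case $k=0$ of an induction: when there are no $x$-generators, the center is purely monomial, so $B_J \to Y$ is log smooth, and one concludes $\cM_{(B_J|D_J)}(\cI\cO_{B_J}) = \cQ\cO_{B_J}$; comparing with $\cM_{(B_J|D_J)}(\cI\cO_{B_J}) = \cI_E^\ell \cdot \cM_{(B_J|D_J)}(\cI')$ shows that $\cM_{(B_J|D_J)}(\cI')$ is precisely the weak transform $\cQ'$. For $k\geq 1$ the inductive reduction to $V(x_1') \to V(x_1)$ (which is the deformation to weighted normal cone of $\oJ|_{V(x_1)}$) eventually lands on this base case, where the restricted deformation is log smooth because the remaining center is monomial. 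Without invoking this transfer theorem for monomial saturation, your identification of the monomial tail is unjustified, and the argument does not close.
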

 
\begin{proof}
We proceed by induction on the number $k$ of finite entries in $\loginv^*_p(\cI)$. If $k=0$, then $\logord_p(\cI) = \infty$, i.e. $J_{\cI,p}^{\log} = \cQ = \cM(I)_p \neq (1)$. Writing $\cI\cO_{B_J} = \cI_{C_J}^\ell \cdot \cI'$ where $\cI'$ is the weak transform of $\cI$ on $B_J$, we have $$\cM_{(B_J|D_J)}(\cI\cO_{B_J}) = \cM_{(B_J|D_J)}(\cI_E^\ell \cdot \cI') = \cI_E^\ell \cdot \cM_{(B_J|D_J)}(\cI').$$ On the other hand, when $k=0$, the morphism $B_J \to Y$ is log smooth, so that \cite[Theorem 3.4.2(3)]{ATW-principalization} implies $$\cM_{(B_J|D_J)}(\cI\cO_{B_J}) = \cM_{(B_J|D_J)}(\cI)\cO_{B_J} = \cQ\cO_{B_J}.$$ Compariung both equations, we deduce that the weak transform $\cQ'$ on $B_J$ under $\pi$ is $\cM_{(B_J|D_J)}(\cI')$. By Proposition~\ref{Prop:maximality-in-B} applied to $\cQ$, we know \begin{align*}\maxinv_{B_J}(\cM_{(B_J|D_J)}(\cI')) = \maxinv_{B_J}(\cQ') &= \maxinv_Y(\cQ) \\ &= \maxinv_{Y}(\cM_{(Y|D)}(\cI)),\end{align*} i.e. $\maxloginv_{(B_J|D_J)}^*(\cI') =\maxloginv_{(Y|D)}^*(\cI)$, as desired.

 If not, $k \geq 1$, i.e. $\logord_p(\cI) = a_1 < \infty$. In this case, one uses the induction hypothesis for $k-1$, similar to the proof of Proposition~\ref{Prop:maximality-in-B} in Section~\ref{Sec:invariant-drops}. We note the required modifications to that proof: \begin{itemize}
     \item Firstly, one needs to prove Lemma~\ref{Lem:derivatives-vs-weak-transform} in this logarithmic setting. The same computation in the proof of that statement actually shows the logarithmic analogue.
     \item One replaces the derivatives and coefficient ideals in that proof by the logarithmic analogues as in Section~\ref{Sec:log-analogues}.
     \item Finally, one replaces the use of Proposition~\ref{P:properties-of-inv-and-centers} by its logarithmic analogue, i.e. Proposition~\ref{P:properties-of-inv-and-centers-log}. \qedhere
 \end{itemize}
 \end{proof} 

 \begin{remark}\label{Rem:approaches} Theorem \ref{Th:log principalization} gives the same final result, of Hironaka-style normal-crossings principalization, as in the earlier \cite{Wlodarczyk-cobordant1,Wlodarczyk-cobordant}. The center $J$ is  different, since those papers bring in the monomial maximal contact parameters $y_i$ in a different way, intertwining with the non-monomial maximal contact parameters rather than separating them. While the method in \cite{Wlodarczyk-cobordant1} is faster and yields a more direct invariant, the maximal contacts in this paper are easier to describe and implement.
The theorem is similar to the main result of \cite{Abramovich-Quek}, which however requires working with Artin stacks.
 \end{remark}

\bibliographystyle{amsalpha}
\bibliography{principalization}

\end{document}